\documentclass[12pt]{article}

\usepackage[T1]{fontenc}
\usepackage[utf8]{inputenc}
\usepackage{lmodern}
\usepackage{amsmath,amssymb,amsthm,mathtools}
\usepackage{mathrsfs}
\usepackage{microtype}
\usepackage[a4paper,margin=1.1in]{geometry}
\usepackage{enumitem}
\usepackage{hyperref}

\hypersetup{
	colorlinks=true,
	linkcolor=blue,
	citecolor=blue,
	urlcolor=blue
}

\theoremstyle{plain}
\newtheorem{theorem}{Theorem}[section]
\newtheorem{proposition}[theorem]{Proposition}
\newtheorem{lemma}[theorem]{Lemma}
\newtheorem{corollary}[theorem]{Corollary}

\theoremstyle{definition}
\newtheorem{definition}[theorem]{Definition}

\newtheorem{example}[theorem]{Example}
\newtheorem{remark}[theorem]{Remark}

\newcommand{\F}{\mathbb F}
\newcommand{\R}{\mathbb R}
\newcommand{\C}{\mathbb C}
\newcommand{\Hh}{\mathbb H}
\newcommand{\Oct}{\mathbb O}
\newcommand{\PP}{\mathbb P}
\newcommand{\RP}{\mathbb{RP}}
\newcommand{\OO}{\mathcal O}
\newcommand{\GL}{\operatorname{GL}}
\newcommand{\SO}{\operatorname{SO}}
\newcommand{\Sp}{\operatorname{Sp}}
\newcommand{\KO}{\operatorname{KO}}

\newcommand{\charac}{\operatorname{char}}
\newcommand{\Nrd}{\operatorname{Nrd}}

\title{Coordinate-Deletion Bundles from Composition Algebras: Hopf Defects, $KO$-Classes, and Real Projective Space}
\author{M. Palaisti}
\date{August 2026}

\begin{document}
	\maketitle
	
\begin{abstract}
	Let \(D\in\{\C,\Hh,\Oct\}\) be a real composition division algebra of dimension \(d\in\{2,4,8\}\). We construct an explicit rank-\(d\) real vector bundle $	E_D\longrightarrow\RP^d$ by deleting one homogeneous coordinate on each standard affine chart, interpreting the remaining coordinates as an element of \(D\), and using the corresponding left-multiplication matrices as transition data. The bundle admits a matrix-determined trivialization away from the \(d+1\) coordinate points. Relative to this trivialization, each deleted point has local clutching map $
	S^{d-1}\longrightarrow\SO(d),\; u\longmapsto L_u$, which is respectively the complex, quaternionic, or octonionic Hopf
	clutching map.
	
	A section arising from the same matrices has exactly the coordinate points as nondegenerate zeros. Consequently, $w(E_D)=1+x^d$, and real \(K\)-theory together with Euler-class cancellation gives $E_D\cong\gamma^{\oplus d}$. Thus the construction supplies explicit framed-defect realizations of these familiar bundles. In particular, it gives a semialgebraic octonionic realization of \(\gamma^{\oplus8}\) on \(\RP^8\) with nine specified local Hopf defects. We also describe the associated Pfister quadratic bundle over arbitrary fields.
\end{abstract}
	
	\section{Introduction}

The dimensions $2$, $4$, and $8$ occur simultaneously in the theory of
composition algebras, multiplicative quadratic forms, Hopf bundles, and
real $K$-theory. A real composition division algebra $D$ has a
positive-definite quadratic norm $N$ satisfying $N(uv)=N(u)N(v)$. By Hurwitz's theorem, apart from $\R$ the possibilities are
$\C,\Hh,\Oct$, of dimensions $2,4,8$ respectively. Their norm forms are the real one-, two-, and three-fold Pfister forms. We use
Springer--Veldkamp and Baez for composition algebras and octonions, and
Lam and Knus--Merkurjev--Rost--Tignol for the quadratic-form and
quaternionic background \cite{SpringerVeldkamp,Baez,Lam,KMRT}.

The present construction also grows out of the embeddability viewpoint developed
in \cite{OmanovicThesis}. That work studies how comparatively
small algebraic data (including Pfister forms, quaternionic structures, special subforms, and embeddings associated with algebras with involution) can encode structural information about larger algebraic or geometric objects. The present paper develops a complementary direction of that philosophy. Rather than using an embedding to recover information about an ambient object, we use the multiplication operators supplied by normed composition algebras as explicit
geometric gluing data. In the quaternionic case this is directly rooted in the
Pfister--quaternionic framework of \cite{OmanovicThesis}; the extension to
$\C,\Hh,\Oct$ shows that the same geometric mechanism belongs more broadly to
the topology of normed composition algebras.

For a unit element $u\in D$, left multiplication
$L_u:D\to D$ is orientation-preserving and orthogonal. In the standard
complex and quaternionic models, the maps
\[
S^1\to\SO(2),\qquad S^3\to\SO(4),\qquad u\mapsto L_u,
\]
are the real clutching maps of the corresponding Hopf line bundles. In
the octonionic case
\[
S^7\to\SO(8),\qquad u\mapsto L_u,
\]
is the standard linear clutching map associated with the octonionic
Hopf fibration $S^7\to S^{15}\to S^8$. Because the unit octonions are
not a group, this last statement is a vector-bundle clutching statement,
not a principal-$S^7$ statement. The relation with Clifford modules and
real $K$-theory is classical \cite{ABS,Baez}.

We place these linear multiplication maps into a single construction on
real projective space. A point of $\RP^d$ has $d+1$ homogeneous
coordinates. On the $i$th standard chart we delete the $i$th
coordinate; the remaining $d$ affine coordinates determine an element
$q_i\in D$. Left multiplication by $q_i$ gives a local matrix $M_i$,
and on overlaps we set
\[
g_{ij}=M_iM_j^{-1}.
\]
The cocycle identity occurs in the associative matrix algebra
$M_d(\R)$, so the construction remains valid for $D=\Oct$. What fails
for octonions is only the associative simplification
$L_aL_b^{-1}=L_{ab^{-1}}$.

The construction is explicit relative to fixed data: the standard
projective cover, the ordering of homogeneous coordinates, and an
ordered orthonormal basis of $D$. With those choices fixed, the same
matrices give a specified trivialization of $E_D$ away from the $d+1$
coordinate points. Relative to that trivialization each point has the
local model $u\mapsto L_u$. The local statement should not be confused
with an additive formula for the global bundle class: changing boundary
orientations or coordinate order can change the sign of a clutching
class, and the $d+1$ boundary components do not come with a canonical
sum in $\pi_{d-1}(\SO(d))$. Our global accounting is instead provided
by characteristic classes and $KO$-theory.

Fixing $v\in D^\times$, the local sections $s_i=q_iv$ glue to a global
section with exactly the coordinate points as nondegenerate zeros.
Since $d+1$ is odd,
\[
w_d(E_D)=x^d\ne0.
\]
The complement trivialization forces the lower Stiefel--Whitney classes
to vanish, hence
\[
w(E_D)=1+x^d.
\]
A projective hyperplane can be chosen disjoint from every coordinate
point, so $E_D$ restricts trivially to a copy of $\RP^{d-1}$. Adams'
calculation of $\widetilde{\KO}(\RP^d)$ then gives
\[
[E_D]-d=d([\gamma]-1).
\]
This first determines the stable class. Because $d$ is even, the
stable identification can be sharpened: both $E_D$ and
$\gamma^{\oplus d}$ are oriented rank-$d$ bundles over the
$d$-dimensional CW complex $\RP^d$, and both have the same nonzero Euler
class. An Euler-class cancellation theorem therefore gives
\[
E_D\cong\gamma^{\oplus d}
\qquad(d=2,4,8).
\]
For the cancellation step we use the even-dimensional top-rank theorem
proved in Appendix~A of Gollinger \cite{Gollinger}; see also Malyi's
general study of Euler classes in a fixed stable class \cite{Malyi}.

The octonionic specialization is consequently stronger than a formal
extension of the quaternionic formulas. It gives a concrete
semialgebraic cocycle for the familiar bundle $\gamma^{\oplus8}$ on
$\RP^8$ together with a matrix-determined trivialization off nine
points for which every local defect is the octonionic Hopf map. In the
classical description
\[
\pi_7(\SO(8))\cong\mathbb Z\oplus\mathbb Z,
\]
the map $u\mapsto L_u$ is the generator usually denoted $\sigma$; its
stabilization generates $\pi_7(\SO)\cong\mathbb Z$
\cite{CardimLamMelloRandall}. Thus the local defects retain genuinely
unstable $\SO(8)$ information even though the global bundle is
abstractly $\gamma^{\oplus8}$.

Over a general field the topological construction has a precise
algebraic limitation. Division controls $F$-rational points, but after
scalar extension the norm becomes isotropic, so the inverse matrices
need not be regular on full scheme-theoretic overlaps. On the natural
norm-nonzero open locus the matrix cocycle is a \v{C}ech coboundary. A
natural global quadratic bundle that retains the norm form is instead
\[
\bigl(D\otimes_F\OO(-1),\mathscr N_D\bigr).
\]

For characteristic classes we use Milnor--Stasheff
\cite{MilnorStasheff}; for vector bundles, stable range, and classifying
spaces we use Hatcher and Husemoller \cite{Hatcher,Husemoller}; and for
the $KO$-groups of real projective spaces we use Adams \cite{Adams}.
\subsection*{Main results}

Let \(D\in\{\C,\Hh,\Oct\}\) be a real composition division algebra of
dimension \(d\in\{2,4,8\}\), equipped with a fixed ordered orthonormal
basis.  We obtain the following results.

\begin{description}
	
	\item[\textbf{Coordinate-deletion cocycle.}]
	On the standard affine chart \(U_i\subset\RP^d\), let \(q_i\) be obtained
	by deleting the \(i\)th homogeneous coordinate and normalizing by that
	coordinate.  If \(M(q_i)\) denotes the matrix of left multiplication by
	\(q_i\), then
	\[
	g_{ij}=M(q_i)M(q_j)^{-1}
	\]
	defines a continuous semialgebraic \(\GL_d(\R)\)-valued \v{C}ech cocycle
	on the standard affine cover.  Hence these matrices define a rank-\(d\)
	real vector bundle
	\[
	E_D\longrightarrow\RP^d.
	\]
	
	\item[\textbf{Hopf defects.}]
	Let \(P_0,\ldots,P_d\) be the coordinate points and put
	\[
	X=\RP^d\setminus\{P_0,\ldots,P_d\}.
	\]
	The bundle \(E_D|_X\) has an explicit trivialization represented on
	\(U_i\cap X\) by \(M(q_i)^{-1}\).  Relative to this trivialization, the
	boundary of a small coordinate ball around each \(P_i\) has clutching map
	\[
	S^{d-1}\longrightarrow\SO(d),\qquad u\longmapsto L_u.
	\]
	For \(D=\C,\Hh,\Oct\), this is respectively the complex, quaternionic,
	or octonionic Hopf clutching map.
	
	\item[\textbf{Characteristic and bundle classes.}]
	Writing \(x=w_1(\gamma)\in H^1(\RP^d;\F_2)\), one has
	\[
	w(E_D)=1+x^d
	\]
	and
	\[
	[E_D]-d=d\bigl([\gamma]-1\bigr)
	\qquad\text{in }\widetilde{\KO}(\RP^d).
	\]
	Moreover,
	\[
	E_D\cong\gamma^{\oplus d}.
	\]
	In particular, the octonionic construction gives an explicit
	semialgebraic realization of \(\gamma^{\oplus8}\) on \(\RP^8\), trivial
	away from nine coordinate points whose local clutching maps are
	octonionic Hopf maps.
	
\end{description}

\section{Composition norms and multiplication operators}
\label{sec:composition}

We begin with the real case. Let $D$ be a finite-dimensional unital
real composition division algebra with quadratic norm $N:D\longrightarrow\R$
satisfying
\[
N(uv)=N(u)N(v),\qquad N(1)=1.
\]
The polar form
\[
\langle u,v\rangle
=\frac12\bigl(N(u+v)-N(u)-N(v)\bigr)
\]
is positive definite. By Hurwitz's theorem, the possible dimensions
are $1,2,4,8$. We restrict to
\[
d=\dim_\R D\in\{2,4,8\}.
\]
Thus $D$ is isomorphic to $\C$, $\Hh$, or $\Oct$.

Fix an ordered orthonormal basis $\mathcal B=(e_0,e_1,\ldots,e_{d-1})$ of $D$. The construction below is canonical relative to the standard
projective cover, the ordering of homogeneous coordinates, and this
chosen ordered basis. We do not claim basis-independence of the
resulting explicit cocycle. The characteristic-class and stable-class
conclusions established below hold for every such choice.

For $u\in D$, let
\[
L_u:D\longrightarrow D,
\qquad L_u(v)=uv,
\]
and let $M(u)$ denote the matrix of $L_u$ in the basis $\mathcal B$.
Since multiplication in $D$ is bilinear, the entries of $M(u)$ depend
linearly on the coordinates of $u$.

\begin{lemma}
	\label{lem:similarity}
	For every $u\in D$,
	\[
	\langle L_u v,L_u w\rangle
	=N(u)\langle v,w\rangle.
	\]
	Equivalently,
	\[
	M(u)^T M(u)=N(u)I_d.
	\]
	If $u\ne0$, then
	\[
	M(u)^{-1}=\frac{1}{N(u)}M(u)^T.
	\]
	Moreover, for $d\in\{2,4,8\}$,
	\[
	\det M(u)=N(u)^{d/2}>0
	\qquad (u\ne0).
	\]
\end{lemma}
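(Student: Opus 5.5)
The plan is to obtain the similarity identity by polarizing the multiplicativity of the norm, and then to read off the matrix statements from orthonormality of $\mathcal B$.

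\textbf{Step 1: the similarity identity.} Fix $u$. Multiplicativity and linearity of $L_u$ give, for all $v,w$,
\[
N(u(v+w))=N(u)N(v+w),\qquad N(uv)=N(u)N(v),\qquad N(uw)=N(u)N(w).
\]
Apply the definition of the polar form, $\langle a,b\rangle=\tfrac12\bigl(N(a+b)-N(a)-N(b)\bigr)$, with $a=uv$ and $b=uw$. Since $uv+uw=u(v+w)$, this yields
\[
\langle uv,uw\rangle=N(u)\langle v,w\rangle .
\]
Only bilinearity of multiplication is used here, never associativity, so the argument is valid for $\Oct$.

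\textbf{Step 2: the matrix forms.} Because $\mathcal B$ is orthonormal, the inner product of two vectors equals the standard dot product of their coordinate vectors. So the identity from Step 1 reads $v^TM(u)^TM(u)w=N(u)\,v^Tw$ for all coordinate vectors $v,w$. This is exactly $M(u)^TM(u)=N(u)I_d$. For $u\neq0$ we have $N(u)>0$ by positive definiteness. Hence $\tfrac1{N(u)}M(u)^T$ is a left inverse of the square matrix $M(u)$, and therefore it is the inverse.

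\textbf{Step 3: the determinant.} Taking determinants gives $(\det M(u))^2=N(u)^d$, so
\[
\det M(u)=\pm N(u)^{d/2}.
\]
Fixing the sign is the only real obstacle, and I will settle it by a connectedness argument. The set $D\setminus\{0\}\cong\R^d\setminus\{0\}$ is connected because $d\ge2$. The function $u\mapsto \det M(u)/N(u)^{d/2}$ is continuous on this set, since the entries of $M(u)$ depend linearly on $u$. By the squared identity it takes only the values $\pm1$, so it is constant. At $u=1$ we have $M(1)=I_d$, so the constant is $+1$. Therefore $\det M(u)=N(u)^{d/2}>0$ for all $u\neq0$. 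Since $d$ is even, $N(u)^{d/2}$ is an honest polynomial power, so no choice of root is involved.
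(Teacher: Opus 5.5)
Your proposal is correct and follows the paper's proof step for step. You polarize multiplicativity of $N$, read off $M(u)^TM(u)=N(u)I_d$ in the orthonormal basis, and fix the sign of $\det M(u)$ using connectedness of $D\setminus\{0\}$ and evaluation at $u=1$; phrasing the sign step via the continuous $\{\pm1\}$-valued function $\det M(u)/N(u)^{d/2}$ just makes the paper's ``the sign is constant'' explicit.
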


\begin{proof}
	The norm identity gives $N(u(v+w))=N(u)N(v+w)$, and similarly for $v$ and $w$ separately. Polarizing yields $	\langle uv,uw\rangle=N(u)\langle v,w\rangle$. In an orthonormal basis this is precisely $M(u)^TM(u)=N(u)I_d$. If $u\ne0$, then $N(u)>0$, so the inverse formula
	follows.
	
	Taking determinants gives $\det(M(u))^2=N(u)^d$. Since $D\setminus\{0\}\cong\R^d\setminus\{0\}$ is connected for
	$d\ge2$, the sign of $\det M(u)$ is constant there. At $u=1$ the
	matrix is $I_d$, so the sign is positive.
\end{proof}

\begin{corollary}
	\label{cor:unit-orthogonal}
	If $N(u)=1$, then $L_u\in\SO(d)$.
\end{corollary}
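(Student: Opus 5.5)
This is an immediate consequence of Lemma~\ref{lem:similarity}. I would specialize its two identities to the case \(N(u)=1\), first to get orthogonality and then to pin down the sign of the determinant.

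First, with \(N(u)=1\) the similarity identity \(M(u)^TM(u)=N(u)I_d\) becomes \(M(u)^TM(u)=I_d\). Because \(\mathcal B\) is an ordered orthonormal basis for the positive-definite polar form \(\langle\cdot,\cdot\rangle\), this matrix identity says exactly that \(L_u\) preserves \(\langle\cdot,\cdot\rangle\). So \(L_u\) lies in the orthogonal group \(O(d)\), where \(D\) is identified with \(\R^d\) via \(\mathcal B\).

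Second, the determinant formula \(\det M(u)=N(u)^{d/2}\) from Lemma~\ref{lem:similarity} gives \(\det M(u)=1\) here. It applies because \(u\neq0\), which follows from \(N(u)=1\neq0\) and \(N(0)=0\). Together with orthogonality, this yields \(L_u\in\SO(d)\).

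I expect no real obstacle. The only subtle input is the positivity of the determinant, which the lemma already secures through the connectedness of \(D\setminus\{0\}\) for \(d\ge2\). Equivalently, one can argue directly on the unit sphere \(S^{d-1}\subset D\): it is connected for \(d\ge2\) and contains \(1\), where \(M(1)=I_d\). The map \(u\mapsto\det M(u)\) is continuous with values in \(\{\pm1\}\) on \(S^{d-1}\), so it is constantly \(1\).
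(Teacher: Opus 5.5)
Your proposal is correct and is exactly the argument the paper intends. The paper states the corollary without a separate proof, as an immediate specialization of Lemma~\ref{lem:similarity}: $M(u)^TM(u)=N(u)I_d$ gives orthogonality, and $\det M(u)=N(u)^{d/2}>0$ fixes the sign of the determinant.
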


Note that, for $D=\Oct$, compositions of left multiplication operators are
ordinary compositions of real linear maps and therefore associative,
even though multiplication in $\Oct$ is not. In particular, one must
distinguish $L_aL_b$ from $L_{ab}$ in general, but there is no ambiguity in products of the matrices $M(a)$ and $M(b)$.

\section{The coordinate-deletion cocycle}
\label{sec:cocycle}

Let $\RP^d=\{[\alpha_0:\cdots:\alpha_d]\}$ and let
\[
U_i=\{[\alpha_0:\cdots:\alpha_d]:\alpha_i\ne0\},
\qquad i=0,\ldots,d,
\]
be the standard affine cover. For each $i$, let
$\sigma_i:\{0,\ldots,d-1\}
\longrightarrow\{0,\ldots,d\}\setminus\{i\}$ be the increasing bijection. Define $q_i:U_i\longrightarrow D$
by
\[
q_i([\alpha])
=\sum_{r=0}^{d-1}
\frac{\alpha_{\sigma_i(r)}}{\alpha_i}e_r.
\]
Thus $q_i$ is obtained by deleting the $i$th homogeneous coordinate,
normalizing by $\alpha_i$, and reading the remaining $d$ affine
coordinates in the fixed ordered basis of $D$. Let $P_i=[0:\cdots:0:1:0:\cdots:0]$ be the $i$th coordinate point.

\begin{lemma}
	\label{lem:q-zero-general}
	For $p\in U_i$,
	\[
	q_i(p)=0\quad\Longleftrightarrow\quad p=P_i.
	\]
	Consequently, on $U_i\cap U_j$ both $q_i$ and $q_j$ are nonzero.
\end{lemma}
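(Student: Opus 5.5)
The proof unwinds the definition of $q_i$ and uses that $\mathcal B$ is a basis of $D$. Take $p=[\alpha_0:\cdots:\alpha_d]\in U_i$, so $\alpha_i\ne0$. Since $e_0,\ldots,e_{d-1}$ are linearly independent, $q_i(p)=\sum_r(\alpha_{\sigma_i(r)}/\alpha_i)e_r$ vanishes exactly when every coefficient $\alpha_{\sigma_i(r)}/\alpha_i$ is zero. Because $\sigma_i$ is a bijection onto $\{0,\ldots,d\}\setminus\{i\}$, this says $\alpha_j=0$ for all $j\ne i$. The homogeneous coordinates are then $(0,\ldots,0,\alpha_i,0,\ldots,0)$ with $\alpha_i\ne0$, and rescaling by $\alpha_i^{-1}$ gives $p=P_i$. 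Conversely, $P_i\in U_i$ since its $i$th coordinate is $1$, and $q_i(P_i)=0$ because all the other coordinates vanish. We should also note that $q_i$ is well defined on projective classes: each ratio $\alpha_{\sigma_i(r)}/\alpha_i$ is invariant under $\alpha\mapsto\lambda\alpha$ with $\lambda\ne0$.

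For the consequence, fix $i\ne j$ and $p\in U_i\cap U_j$. If $q_i(p)=0$, the first part gives $p=P_i$. But the $j$th coordinate of $P_i$ is $0$, so $P_i\notin U_j$, which is a contradiction. By symmetry $q_j(p)\ne0$ as well. When $i=j$ the claim is not needed, and in any case the cocycle is only evaluated on overlaps of distinct charts. Lemma~\ref{lem:similarity} then shows that $M(q_i)$ and $M(q_j)$ are invertible on $U_i\cap U_j$; this is the point of the lemma for the subsequent cocycle.

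There is no real obstacle here. The only care needed is bookkeeping: $\sigma_i$ must hit every index other than $i$, and $P_i$ must be excluded from $U_j$ for $j\ne i$.
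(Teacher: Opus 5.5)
Your proof is correct and follows the paper's argument: the coefficients of $q_i$ are the affine ratios $\alpha_k/\alpha_i$ for $k\ne i$, these all vanish exactly at $P_i$, and $P_i\notin U_j$ for $j\ne i$. You spell out the bookkeeping the paper leaves implicit, namely linear independence of the basis and bijectivity of $\sigma_i$. You also rightly note that the consequence is meant only for $i\ne j$.
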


\begin{proof}
	The coefficients of $q_i$ are exactly the $d$ affine coordinate ratios
	other than the $i$th one. They vanish simultaneously exactly at
	$P_i$. If $j\ne i$, then $P_i\notin U_j$, so no coordinate point lies
	in a pairwise overlap involving two distinct charts.
\end{proof}

Put $M_i(p)=M(q_i(p))$. The matrix $M_i$ vanishes at $P_i$, but by
Lemma~\ref{lem:q-zero-general} it is invertible wherever it is used in
a transition function. For $p\in U_i\cap U_j$, define the general cocycle by $	g_{ij}(p)=M_i(p)M_j(p)^{-1}$. The following result highlights the properties of this construction. 

\begin{proposition}
	\label{prop:general-cocycle}
	The maps
	\[
	g_{ij}:U_i\cap U_j\longrightarrow\GL_d(\R)
	\]
	are continuous and semialgebraic and satisfy
	\[
	g_{ii}=I_d,
	\qquad
	g_{ij}=g_{ji}^{-1},
	\qquad
	g_{ij}g_{jk}g_{ki}=I_d.
	\]
	Hence they define a rank-$d$ real vector bundle
	\[
	E_D\longrightarrow\RP^d.
	\]
\end{proposition}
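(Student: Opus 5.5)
The plan is to derive everything from the explicit formula for the inverse in Lemma~\ref{lem:similarity} and from the fact that all products here are products of matrices in the associative algebra $M_d(\R)$.

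\textbf{Well-definedness, continuity and semialgebraicity.} On $U_i$ the coordinate functions $\alpha_{\sigma_i(r)}/\alpha_i$ are well defined (invariant under rescaling $\alpha$), regular and continuous. Since the entries of $M(u)$ are linear in the coordinates of $u$, each $M_i$ is a continuous matrix-valued function on $U_i$ whose entries are rational functions with denominator $\alpha_i$. On $U_i\cap U_j$, Lemma~\ref{lem:q-zero-general} gives $q_j\neq0$, so $N(q_j)>0$. Lemma~\ref{lem:similarity} then gives
\[
M_j^{-1}=N(q_j)^{-1}M_j^{T}.
\]
This is continuous and semialgebraic, being a polynomial expression divided by a nonvanishing polynomial expression in the affine coordinates. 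Hence
\[
g_{ij}=N(q_j)^{-1}M_iM_j^{T}
\]
is continuous and semialgebraic. It lies in $\GL_d(\R)$, since by Lemma~\ref{lem:similarity} we have $\det g_{ij}=\bigl(N(q_i)/N(q_j)\bigr)^{d/2}>0$.

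\textbf{Cocycle identities.} These are formal consequences of $g_{ij}=M_iM_j^{-1}$ in the associative algebra $M_d(\R)$:
\begin{align*}
g_{ii}&=M_iM_i^{-1}=I_d \quad\text{on } U_i\cap U_i=U_i\setminus\{P_i\},\\
g_{ji}^{-1}&=\bigl(M_jM_i^{-1}\bigr)^{-1}=M_iM_j^{-1}=g_{ij},\\
g_{ij}g_{jk}g_{ki}&=M_iM_j^{-1}M_jM_k^{-1}M_kM_i^{-1}=I_d \quad\text{on } U_i\cap U_j\cap U_k.
\end{align*}
The only subtlety, which I expect to be the main obstacle, is $g_{ii}$. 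The matrix $M_i$ is singular at $P_i\in U_i$, so the formula $M_iM_i^{-1}$ only makes sense on $U_i\setminus\{P_i\}$. I would handle this by using Lemma~\ref{lem:q-zero-general} to note that all genuine overlaps with $i\ne j$ avoid the coordinate points. Then $g_{ii}$ is simply defined to be $I_d$ on all of $U_i$, as in any \v{C}ech cocycle, and this agrees with the formula wherever the formula is defined. For triple overlaps with repeated indices the identity reduces to the two previous cases. No octonionic associativity is used anywhere, since we never rewrite $M_iM_j^{-1}$ as $M(q_iq_j^{-1})$.

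\textbf{Constructing the bundle.} Apply the standard clutching construction, as in Husemoller or Hatcher. Define
\[
E_D=\Bigl(\bigsqcup_i U_i\times\R^d\Bigr)\Big/\sim,\qquad (p,v)_j\sim\bigl(p,g_{ij}(p)v\bigr)_i \quad\text{for } p\in U_i\cap U_j.
\]
The three cocycle identities make $\sim$ an equivalence relation: reflexivity comes from $g_{ii}=I_d$, symmetry from $g_{ij}=g_{ji}^{-1}$, and transitivity from $g_{ij}g_{jk}g_{ki}=I_d$. Continuity of the $g_{ij}$ then makes the projection to $\RP^d$ locally trivial with fibre $\R^d$ and linear transition maps. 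This yields the rank-$d$ real vector bundle $E_D\to\RP^d$.
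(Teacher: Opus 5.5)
Your proposal is correct and follows the paper's proof: continuity and semialgebraicity come from the inverse formula $M_j^{-1}=N(q_j)^{-1}M_j^T$ of Lemma~\ref{lem:similarity} with $N(q_j)>0$ on overlaps, and the cocycle identities are formal manipulations in the associative algebra $M_d(\R)$, so no octonionic associativity is used. Your extra remarks are sound refinements: the positivity of $\det g_{ij}$, the explicit gluing construction, and setting $g_{ii}=I_d$ on all of $U_i$ because $M_i$ is singular at $P_i$, a point the paper glosses over. However, the displayed equation ``$U_i\cap U_i=U_i\setminus\{P_i\}$'' is a slip; you only mean that the formula $M_iM_i^{-1}$ is defined off $P_i$.
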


\begin{proof}
	The entries of $M_i$ are linear in the affine coordinates on $U_i$.
	By Lemma~\ref{lem:similarity}, on an overlap
	\[
	M_j^{-1}=\frac{1}{N(q_j)}M_j^T,
	\]
	where the denominator is positive and nonzero. Thus the entries of
	$g_{ij}$ are rational functions with nonvanishing denominator, so the
	maps are continuous and semialgebraic.
	
	The first two cocycle identities are immediate. On a triple overlap,
	\[
	\begin{aligned}
		g_{ij}g_{jk}g_{ki}
		&=(M_iM_j^{-1})(M_jM_k^{-1})(M_kM_i^{-1})\\
		&=I_d.
	\end{aligned}
	\]
	This calculation takes place in the associative matrix algebra
	$M_d(\R)$ and therefore applies without change when $D=\Oct$.
\end{proof}

By Lemma~\ref{lem:similarity}, every $M_i$ is a positive scalar times
an orientation-preserving orthogonal map away from $P_i$. Hence every
$g_{ij}$ has positive determinant and $E_D$ is orientable. Equivalently,
its structure group reduces to $\SO(d)$. This statement is an abstract
reduction of structure group; it does not require replacing the given
transition functions by a pointwise normalization on the same cover.

\begin{example}[The quaternionic matrix]
	\label{ex:quaternion-matrix}
	For $D=\Hh$ with standard basis $1,i,j,k$, write
	\[
	u=x_0+x_1i+x_2j+x_3k.
	\]
	Then
	\[
	M(u)=
	\begin{pmatrix}
		x_0 & -x_1 & -x_2 & -x_3\\
		x_1 & x_0 & -x_3 & x_2\\
		x_2 & x_3 & x_0 & -x_1\\
		x_3 & -x_2 & x_1 & x_0
	\end{pmatrix}.
	\]
	On $\RP^4$,
	\[
	q_0=
	\frac{\alpha_1}{\alpha_0}
	+\frac{\alpha_2}{\alpha_0}i
	+\frac{\alpha_3}{\alpha_0}j
	+\frac{\alpha_4}{\alpha_0}k,
	\]
	while
	\[
	q_1=
	\frac{\alpha_0}{\alpha_1}
	+\frac{\alpha_2}{\alpha_1}i
	+\frac{\alpha_3}{\alpha_1}j
	+\frac{\alpha_4}{\alpha_1}k,
	\]
	and the transition map on $U_0\cap U_1$ is
	\[
	g_{01}=M(q_0)M(q_1)^{-1}.
	\]
	Because $\Hh$ is associative this can also be written
	$g_{01}=L_{q_0q_1^{-1}}$. The analogous simplification is not used in
	the octonionic case.
\end{example}

\section{The global section and Hopf defects}
\label{sec:defects}

Fix a nonzero element $v\in D$. On $U_i$, define
\[ s_i(p)=M_i(p)v=q_i(p)v.
\]

\begin{lemma}
	\label{lem:general-section}
	The local sections $s_i$ glue to a global section
	\[
	s_v:\RP^d\longrightarrow E_D.
	\]
	Its zero set is
	\[
	Z(s_v)=\{P_0,\ldots,P_d\},
	\]
	and every zero is nondegenerate.
\end{lemma}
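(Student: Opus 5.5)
The plan is to check three things in turn: the gluing condition, the zero set, and nondegeneracy at each zero.

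\textbf{Gluing.} With transition functions $g_{ij}$, a family of local sections $s_i:U_i\to\R^d$ defines a global section exactly when $s_i=g_{ij}s_j$ on $U_i\cap U_j$. This holds here because
\[
g_{ij}s_j=M_iM_j^{-1}M_jv=M_iv=s_i .
\]
The computation uses only the associative matrix algebra $M_d(\R)$, just as in Proposition~\ref{prop:general-cocycle}, so it applies verbatim when $D=\Oct$. It never requires rewriting $M_iv$ as a product in $D$ beyond the definition $M(q_i)v=q_iv$.

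\textbf{Zero set.} On $U_i$ we have $s_i(p)=q_i(p)v$. Since $D$ is a division composition algebra, $N(q_iv)=N(q_i)N(v)$, and $N(v)>0$. Hence $s_i(p)=0$ if and only if $q_i(p)=0$, and Lemma~\ref{lem:q-zero-general} says this happens exactly when $p=P_i$. The charts cover $\RP^d$, and $P_i$ lies only in $U_i$. Therefore $Z(s_v)=\{P_0,\ldots,P_d\}$, with $P_i$ detected in the chart $U_i$.

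\textbf{Nondegeneracy.} This is the step requiring the most care, although it is short. In the trivialization over $U_i$, use the affine coordinates $y=(y_0,\dots,y_{d-1})$, where $y_r=\alpha_{\sigma_i(r)}/\alpha_i$. Then $q_i=\sum_r y_re_r$ is simply the linear identification $\R^d\cong D$, and $P_i$ corresponds to $y=0$. In these coordinates the section is
\[
s_i(y)=R_v(y),\qquad R_v(u)=uv,
\]
which is right multiplication by $v$. This map is linear in $y$, so its derivative at $0$ is $R_v$ itself. By the right-handed analogue of Lemma~\ref{lem:similarity}, obtained by polarizing $N(uv)=N(u)N(v)$ in the first variable, we get $R_v^TR_v=N(v)I_d$. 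Hence $R_v$ is invertible, and the zero is transversal, i.e.\ nondegenerate.

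If a local index is wanted, note that $\det R_v=N(v)^{d/2}>0$ by the same connectedness argument as before. So each zero has local index $+1$ relative to the chart orientation. Only nondegeneracy is needed here, however, and the mod~$2$ count of $d+1$ zeros is what later gives $w_d$.
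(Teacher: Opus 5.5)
Your proposal is correct and follows the paper's proof essentially step for step. You use the same matrix-algebra gluing computation, the same reduction of the zero set to Lemma~\ref{lem:q-zero-general}, and the same nondegeneracy argument via the linear local representative $q\mapsto qv$ with derivative $R_v$. The only differences are cosmetic: you justify invertibility of $R_v$ through $R_v^TR_v=N(v)I_d$ rather than directly from the division property, and your remark that $\det R_v>0$ is correct but not needed for the lemma.
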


\begin{proof}
	On $U_i\cap U_j$,
	\[
	g_{ij}s_j
	=M_iM_j^{-1}M_jv
	=M_iv=s_i,
	\]
	so the local sections glue. Because $D$ is a division algebra and $v\ne0$, right multiplication by $v$ is an invertible real linear map. Hence $q_i(p)v=0$ if and only if $q_i(p)=0$, which by Lemma~\ref{lem:q-zero-general} occurs exactly at
	$P_i$.
	
	Use the $d$ affine coordinate ratios on $U_i$ as local coordinates
	centered at $P_i$. In those coordinates the map $p\mapsto q_i(p)$ is
	a linear identification with $D$, and the local representative of the
	section is
	\[
	q\longmapsto qv.
	\]
	Its derivative is the invertible right multiplication map $R_v$.
	Thus every zero is nondegenerate.
\end{proof}

The same matrices give an explicit trivialization away from the zeros.
This is the feature that turns the zero count into a local-defect
description.

\begin{proposition}
	\label{prop:punctured-trivialization}
	Put
	\[
	X=\RP^d\setminus\{P_0,\ldots,P_d\}.
	\]
	Then $E_D|_X$ is trivial. More precisely, if a vector in the fibre is
	represented on $U_i\cap X$ by $w_i\in\R^d$, then
	\[
	\tau_i(w_i)=M_i^{-1}w_i
	\]
	defines a global trivialization
	\[
	\tau:E_D|_X\xrightarrow{\ \cong\ }X\times\R^d.
	\]
\end{proposition}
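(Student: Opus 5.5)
The plan is to check directly that the local maps $\tau_i$ are well defined on $U_i\cap X$, that they agree on overlaps (so they patch to a single bundle map over $X$), and that they are fibrewise linear isomorphisms. The bundle $E_D$ is built from the cocycle of Proposition~\ref{prop:general-cocycle}, so I will use the standard convention that a point of $E_D$ over $p\in U_i\cap U_j$ is represented by $w_i\in\R^d$ in the $U_i$-chart and by $w_j\in\R^d$ in the $U_j$-chart, with $w_i=g_{ij}(p)w_j$. This is the same convention under which the local sections $s_i$ glued in Lemma~\ref{lem:general-section}.

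\textbf{Step 1: well-definedness.} On $U_i\cap X$ the point $P_i$ has been removed. By Lemma~\ref{lem:q-zero-general}, $q_i(p)\ne0$ there, and Lemma~\ref{lem:similarity} then gives that $M_i(p)$ is invertible with
\[
M_i(p)^{-1}=N(q_i(p))^{-1}M_i(p)^T .
\]
This inverse depends continuously, indeed semialgebraically, on $p$. Hence $\tau_i:(U_i\cap X)\times\R^d\to(U_i\cap X)\times\R^d$, $(p,w_i)\mapsto(p,M_i(p)^{-1}w_i)$, is a continuous fibrewise linear isomorphism. Its inverse $(p,w)\mapsto(p,M_i(p)w)$ is also continuous.

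\textbf{Step 2: compatibility on overlaps.} If $w_i=g_{ij}w_j=M_iM_j^{-1}w_j$, then
\[
M_i^{-1}w_i=M_i^{-1}M_iM_j^{-1}w_j=M_j^{-1}w_j,
\]
so $\tau_i$ and $\tau_j$ agree on $E_D|_{U_i\cap U_j\cap X}$. As in Proposition~\ref{prop:general-cocycle}, this computation takes place only in the associative matrix algebra $M_d(\R)$, so it is valid for $D=\Oct$. The sets $U_i\cap X$ cover $X$. The $\tau_i$ therefore glue to a continuous map $\tau:E_D|_X\to X\times\R^d$ that is a linear isomorphism on each fibre. Its local inverses $w\mapsto M_iw$ glue in the same way, so $\tau$ is a homeomorphism and hence a trivialization. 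Equivalently, the cocycle $g_{ij}=M_iM_j^{-1}$ restricted to $X$ is a \v{C}ech coboundary of the $0$-cochain $(M_i^{-1})$ with values in $\GL_d(\R)$.

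\textbf{Main obstacle.} No step is genuinely hard. The one point needing care is Step 1: the inverse $M_i^{-1}$ must exist on all of $U_i\cap X$, not merely on pairwise overlaps. This is exactly why the points $P_i$ are removed, and it is supplied by Lemma~\ref{lem:q-zero-general} together with $N(q)>0$ for $q\ne0$. It also needs a consistency check: the direction of the convention $w_i=g_{ij}w_j$ must match the gluing used for $s_v$. With that convention, $\tau(s_v)=M_i^{-1}M_iv=v$ is constant on $X$, which confirms the convention.
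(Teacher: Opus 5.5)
Your proposal is correct and follows the paper's proof. The key overlap computation $M_i^{-1}w_i=M_i^{-1}M_iM_j^{-1}w_j=M_j^{-1}w_j$, together with invertibility of $M_i$ on $U_i\cap X$ from Lemma~\ref{lem:q-zero-general} and Lemma~\ref{lem:similarity}, is exactly the paper's argument; your extra checks (continuity of the inverses, gluing of the local inverses, and the consistency check $\tau(s_v)=v$) make explicit what the paper leaves implicit.
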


\begin{proof}
	On $U_i\cap U_j\cap X$, representatives satisfy
	\[
	w_i=g_{ij}w_j=M_iM_j^{-1}w_j.
	\]
	Therefore
	\[
	M_i^{-1}w_i=M_j^{-1}w_j.
	\]
	Since $M_i$ is invertible on $U_i\cap X$, the formulas define a global
	bundle isomorphism.
\end{proof}

We now identify the local clutching data determined by this
trivialization. Choose pairwise disjoint closed coordinate balls $B_i\subset U_i $ centered at $P_i$, small enough that the affine-coordinate map
$p\mapsto q_i(p)$ identifies $B_i$ with a closed ball in $D$. Use the
original $U_i$-frame on $B_i$ and the trivialization
of Proposition~\ref{prop:punctured-trivialization} outside the centers.

\begin{theorem}
	\label{thm:local-hopf}
	On $\partial B_i\cong S^{d-1}$, the clutching map between the local
	$U_i$-frame and the punctured-space trivialization is represented by
	$M_i=L_{q_i}$. After radial normalization it is homotopic through
	$\GL_d^+(\R)$ to
	\[
	\lambda_D:S^{d-1}\longrightarrow\SO(d),
	\qquad
	u\longmapsto L_u,
	\]
	where $N(u)=1$.
	
	For $D=\C$ and $D=\Hh$, this is the underlying real clutching map of
	the complex and quaternionic Hopf line bundle. For $D=\Oct$, it is
	the standard octonionic Hopf clutching map associated with the fibration
	$S^7\to S^{15}\to S^8$; this is a clutching description of a real
	rank-eight vector bundle and does not make $S^7$ into a structure group.
\end{theorem}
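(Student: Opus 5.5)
The plan is to compute the transition between the two frames on $\partial B_i$ directly from the definitions, then deform it. On $B_i\subset U_i$ a fibre vector is represented by its $U_i$-coordinate $w_i\in\R^d$. On $\partial B_i\subset U_i\cap X$ the punctured trivialization of Proposition~\ref{prop:punctured-trivialization} sends $w_i$ to $\tau_i(w_i)=M_i^{-1}w_i$. The change of frame from the punctured trivialization to the local $U_i$-frame is therefore $w\mapsto M_i(p)w$, so the clutching map is
\[
\partial B_i\longrightarrow\GL_d(\R),\qquad p\longmapsto M_i(p)=M(q_i(p))=L_{q_i(p)}.
\]
It takes values in $\GL_d^+(\R)$ by Lemma~\ref{lem:similarity}, because $q_i\neq0$ on $\partial B_i$ by Lemma~\ref{lem:q-zero-general}. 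Whether one uses $M_i$ or $M_i^{-1}$ depends on the direction of clutching. The two choices differ by inversion, which reverses the homotopy class, so I will fix the stated convention and note that the statement concerns that convention.

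For the radial normalization, identify $\partial B_i$ with the sphere $\{q:N(q)=\rho^2\}$ in $D$ via $q_i$, and then with $S^{d-1}=\{u:N(u)=1\}$ by $u=q/\rho$. Bilinearity of multiplication gives $M(q)=\rho\,M(u)$. The homotopy
\[
H_t(u)=\bigl((1-t)\rho+t\bigr)M(u),\qquad t\in[0,1],
\]
stays in $\GL_d^+(\R)$, since it is a positive scalar multiple of $M(u)$, which lies in $\SO(d)$ by Corollary~\ref{cor:unit-orthogonal}. At $t=1$ it equals $\lambda_D(u)=L_u$. If one prefers, the Gram--Schmidt deformation retraction of $\GL_d^+$ onto $\SO(d)$ fixes $\lambda_D$, so the class in $\pi_{d-1}(\SO(d))$ is well defined. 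I should also check that the identification $\partial B_i\cong S^{d-1}$ carries the orientation induced from the affine coordinates. Any other choice only changes the class by a sign, which is consistent with the introduction's caveat.

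The identification with the Hopf clutching maps is the remaining step, and the one I expect to need the most care, since it depends on conventions rather than computation. The Hopf bundle over $DP^1\cong S^d$ is obtained by gluing two copies of $D^d\times D$ along $S^{d-1}$ using the transition $u\mapsto L_u$, or $R_u$ depending on whether lines are taken as left or right $D$-submodules. This is the classical description of the tautological line bundle on the two charts $[1:z]$ and $[w:1]$, where the transition is multiplication by $z^{\pm1}$; restricted to the unit sphere it is $u$ or $\bar u=u^{-1}$. For $\C$ and $\Hh$ I will show the standard charts give exactly $L_u$, or its inverse, which is homotopic to it after reversing the sphere orientation. For $\Oct$ there is no projective line over a group, so I will instead cite the standard description of the octonionic Hopf fibration $S^{15}\to S^8$ as the sphere bundle of the rank-eight real bundle clutched by $u\mapsto L_u$ \cite{Baez,CardimLamMelloRandall}. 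I will remark that only the linear maps $L_u\in\SO(8)$ are used, never a group structure on $S^7$, which justifies the final sentence of the theorem.
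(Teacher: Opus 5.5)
Your proposal is correct and follows the paper's proof essentially step for step: you read off $w_i=M_iw$ from $\tau_i(w_i)=M_i^{-1}w_i$, normalize radially via $M(\rho u)=\rho M(u)$ with $M(u)\in\SO(d)$, and defer the identification with the Hopf clutching maps to the classical references. Your explicit homotopy $\bigl((1-t)\rho+t\bigr)M(u)$ and the chart sketch on $DP^1$ just spell out what the paper calls ``homotopically irrelevant'' and ``classical''; contrary to your remark about $\Oct$, the same chart computation with the subspaces $\{(x,zx)\}$ also works there by alternativity, so that case need not rest on citation alone.
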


\begin{proof}
	Let $w$ denote coordinates in the punctured-space trivialization and
	$w_i$ coordinates in the local $U_i$-frame. By definition of
	$\tau_i$, we have $w=M_i^{-1}w_i$, so $w_i=M_iw$. Thus the transition from the exterior trivialization to the local frame
	is $M_i=L_{q_i}$.
	
	On a sphere $N(q_i)=\varepsilon^2$ around the origin, write
	$q_i=\varepsilon u$ with $N(u)=1$. Bilinearity of multiplication gives
	\[
	L_{q_i}=\varepsilon L_u.
	\]
	The positive scalar factor $\varepsilon$ is homotopically irrelevant in
	$\GL_d^+(\R)$, while Corollary~\ref{cor:unit-orthogonal} gives
	$L_u\in\SO(d)$. For $D=\C,\Hh$ the identification with the real clutching maps of the
	Hopf line bundles is classical. In the octonionic case the same linear
	map is the standard clutching map associated with the octonionic Hopf
	fibration. See \cite{ABS,Baez}.
\end{proof}

We would like to point out that Theorem~\ref{thm:local-hopf} is relative to the ordered projective
coordinates, the standard cover, the ordered basis of $D$, and the
trivialization defined by the matrices $M_i^{-1}$. This is the precise
sense in which the local model is specified. Reordering the affine
coordinates or changing an orientation convention on
$\partial B_i\cong S^{d-1}$ may precompose the displayed map by a
degree-$-1$ self-map of the sphere, and hence may replace its oriented
homotopy class by its negative. No absolute sign for a defect is claimed
without these conventions.

Further, the $d+1$ local clutching maps should not be read as an unproved formula
saying that the global bundle class is the sum of $d+1$ identical Hopf
classes. The complement has several boundary components, their
orientations depend on choices, and $\RP^d$ is nonorientable for the even
$d$ considered here. The local-defect description is geometric; the
rigorous global accounting is the Stiefel--Whitney and $KO$ computation
in Sections~\ref{sec:sw} and~\ref{sec:ko}. This distinction is especially
important in the octonionic case, where the local map has an unstable
class in $\pi_7(\SO(8))$ while the global bundle is analyzed first through
stable $KO$-theory.

\section{Stiefel--Whitney classes}
\label{sec:sw}

Let
\[
x=w_1(\gamma)\in H^1(\RP^d;\F_2)
\]
be the standard generator. Recall
\[
H^*(\RP^d;\F_2)
\cong\F_2[x]/(x^{d+1}).
\]

\begin{theorem}
	\label{thm:top-class}
	For $d\in\{2,4,8\}$,
	\[
	w_d(E_D)=x^d\ne0.
	\]
\end{theorem}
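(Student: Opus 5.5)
The plan is to compute $w_d(E_D)$ as the mod-$2$ Euler class of $E_D$, using the global section $s_v$ of Lemma~\ref{lem:general-section}. Recall that for a rank-$d$ real vector bundle over a closed $d$-manifold $M$, the top Stiefel--Whitney class $w_d$ is the mod-$2$ reduction of the Euler class with twisted coefficients. It is Poincaré dual, with $\F_2$ coefficients, to the zero set of a section transverse to the zero section. No orientability of the base or of the bundle is needed mod $2$; see Milnor--Stasheff. Concretely, pairing $w_d(E_D)$ with the fundamental class $[\RP^d]_2\in H_d(\RP^d;\F_2)$ gives the number of nondegenerate zeros of a generic section, counted mod $2$.

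First, I would invoke Lemma~\ref{lem:general-section}. The section $s_v$ has zero set exactly $\{P_0,\ldots,P_d\}$, and every zero is nondegenerate. Here nondegeneracy means that the local representative $q\mapsto qv$ has invertible derivative $R_v$, so $s_v$ is transverse to the zero section. Each transverse zero contributes exactly $1\in\F_2$ to the mod-$2$ intersection number of $s_v$ with the zero section; over $\F_2$ no local index signs arise. Hence
\[
\langle w_d(E_D),[\RP^d]_2\rangle \equiv d+1 \equiv 1 \pmod 2,
\]
since $d\in\{2,4,8\}$ is even.

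Second, I would translate this evaluation into the ring structure. We have $H^d(\RP^d;\F_2)\cong\F_2$, generated by $x^d$, and $\langle x^d,[\RP^d]_2\rangle=1$. Therefore $w_d(E_D)$ is nonzero exactly when its evaluation is $1$. It follows that $w_d(E_D)=x^d\ne0$.

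The main point needing care is the justification of ``$w_d$ counts zeros mod $2$.'' I would cite the standard fact that the mod-$2$ Euler class of the bundle is $w_d$, together with the fact that the Euler class is dual to the zero locus of a transverse section. If a self-contained argument is preferred, one can use obstruction theory instead. The primary obstruction to a nonvanishing section of a rank-$d$ bundle over a $d$-complex lies in $H^d(\RP^d;\pi_{d-1}(S^{d-1}))$ with local coefficients, and it reduces mod $2$ to $w_d$. The obstruction cocycle evaluated on a top cell is computed by the local degrees of $s_v$ around its zeros. Each of these degrees is $\pm1$, because $R_v$ is invertible, so each zero contributes $1$ mod $2$, and their sum is again $d+1\equiv1$. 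Either route reduces the theorem to Lemma~\ref{lem:general-section} and the parity of $d+1$.
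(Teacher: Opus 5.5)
Your proposal is correct and follows the paper's own proof. Both evaluate $w_d(E_D)$ on $[\RP^d]_2$ as the mod-$2$ count of the $d+1$ transverse zeros of $s_v$ from Lemma~\ref{lem:general-section}, then use that $x^d$ generates $H^d(\RP^d;\F_2)$; your obstruction-theoretic aside is just an alternative justification of the same standard fact, not a different route.
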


\begin{proof}
	A rank-$d$ real vector bundle over a closed $d$-manifold has top
	Stiefel--Whitney number equal to the mod-$2$ number of zeros of a
	transverse section. By Lemma~\ref{lem:general-section}, the section
	$s_v$ has $d+1$ nondegenerate zeros. Since $d$ is even,
	\[
	d+1\equiv1\pmod2.
	\]
	Therefore
	\[
	\langle w_d(E_D),[\RP^d]_2\rangle=1.
	\]
	The group $H^d(\RP^d;\F_2)$ is generated by $x^d$, so the result follows.
\end{proof}

The punctured-space trivialization also determines all lower
Stiefel--Whitney classes.

\begin{lemma}
	\label{lem:cohomology-injection}
	Let
	\[
	j:X=\RP^d\setminus\{P_0,\ldots,P_d\}\hookrightarrow\RP^d.
	\]
	For every $k<d$, the map
	\[
	j^*:H^k(\RP^d;\F_2)\longrightarrow H^k(X;\F_2)
	\]
	is injective.
\end{lemma}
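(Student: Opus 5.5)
The plan is to use the long exact sequence of the pair $(\RP^d,X)$ with $\F_2$ coefficients, together with excision to localize the relative cohomology at the removed points. I will show that $H^k(\RP^d,X;\F_2)=0$ for every $k<d$. Exactness of
\[
H^k(\RP^d,X;\F_2)\longrightarrow H^k(\RP^d;\F_2)\xrightarrow{\ j^*\ }H^k(X;\F_2)
\]
then gives injectivity of $j^*$ immediately.

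To compute the relative groups, use the pairwise disjoint closed coordinate balls $B_i$ centred at $P_i$ already chosen before Theorem~\ref{thm:local-hopf}. Excise the closed set $\RP^d\setminus\bigcup_i\operatorname{int}B_i$, which lies in the interior of $X$ since $X$ is open. This gives
\[
H^k(\RP^d,X;\F_2)\cong\bigoplus_{i=0}^{d}H^k\bigl(B_i,B_i\setminus\{P_i\};\F_2\bigr).
\]
Each $B_i\setminus\{P_i\}$ deformation retracts radially onto $\partial B_i\cong S^{d-1}$, and $B_i$ is contractible. Hence each summand is $\widetilde H^{k-1}(S^{d-1};\F_2)$, which is $\F_2$ for $k=d$ and zero otherwise. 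In particular every summand vanishes for $k<d$, and the relative group is zero.

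An alternative route is to note that $X$ is obtained from the $d$-manifold $\RP^d$ by removing points. Removing a point only affects the top cell: it is a punctured $d$-disc, which deformation retracts onto a complex of dimension $\le d-1$, and the inclusion is a $(d-1)$-equivalence. Either argument works; I would present the excision version because it is self-contained.

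The only point needing care is checking the hypotheses of excision. The excised set must have closure contained in the interior of $X$. Since the $P_i$ lie in the interiors of the $B_i$, the set $\RP^d\setminus\bigcup_i\operatorname{int}B_i$ is closed and is contained in the open set $X$, so the hypothesis holds. I expect no genuine obstacle. One remark worth making is that the case $k=d$ is exactly where injectivity can fail, and indeed $j^*x^d=0$, which is consistent with $w_d(E_D)\neq0$ in Theorem~\ref{thm:top-class} even though $E_D|_X$ is trivial.
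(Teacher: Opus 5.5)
Your proof is correct and follows the same route as the paper: excision reduces $H^*(\RP^d,X;\F_2)$ to a direct sum of local groups $H^*(D^d,D^d\setminus\{0\};\F_2)$, which are concentrated in degree $d$, and the long exact sequence of the pair then gives injectivity of $j^*$ for $k<d$. One minor point: excising your closed set actually leaves the open balls $\operatorname{int}B_i$ rather than $B_i$, but $(\operatorname{int}B_i,\operatorname{int}B_i\setminus\{P_i\})$ and $(B_i,B_i\setminus\{P_i\})$ are homotopy equivalent pairs, so this changes nothing.
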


\begin{proof}
	By excision, the relative cohomology of the pair
	$(\RP^d,X)$ is the direct sum of the local cohomology groups of the
	$d+1$ removed points. Each local pair is equivalent to
	$(D^d,D^d\setminus\{0\})$, whose relative cohomology with $\F_2$
	coefficients is concentrated in degree $d$. Hence
	\[
	H^k(\RP^d,X;\F_2)=0
	\qquad(k<d).
	\]
	The long exact sequence of the pair gives the required injectivity.
\end{proof}

\begin{corollary}[Total Stiefel--Whitney class]
	\label{cor:total-sw}
	For $d\in\{2,4,8\}$,
	\[
	w(E_D)=1+x^d.
	\]
\end{corollary}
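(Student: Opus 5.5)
The plan is to show that all lower Stiefel--Whitney classes vanish by restricting to the punctured space $X$, and to take the top class from Theorem~\ref{thm:top-class}. Since $w_0(E_D)=1$ and $H^k(\RP^d;\F_2)=0$ for $k>d$, the total class is
\[
w(E_D)=1+w_1(E_D)+\cdots+w_{d-1}(E_D)+w_d(E_D).
\]
Theorem~\ref{thm:top-class} already gives $w_d(E_D)=x^d$. It therefore remains to prove $w_k(E_D)=0$ for $1\le k\le d-1$.

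For this, apply naturality of Stiefel--Whitney classes to the inclusion $j:X\hookrightarrow\RP^d$:
\[
j^*w_k(E_D)=w_k(j^*E_D)=w_k(E_D|_X).
\]
By Proposition~\ref{prop:punctured-trivialization}, $E_D|_X$ is trivial, so $w_k(E_D|_X)=0$ for all $k\ge1$. Hence $j^*w_k(E_D)=0$. For $k<d$, Lemma~\ref{lem:cohomology-injection} says $j^*$ is injective on $H^k(\,\cdot\,;\F_2)$, so $w_k(E_D)=0$ for $1\le k\le d-1$. Combining this with the top class gives $w(E_D)=1+x^d$.

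There is no serious obstacle here; the only inputs are naturality, triviality on $X$, and the injectivity lemma. One point deserves a brief check: Lemma~\ref{lem:cohomology-injection} relies on excision to identify $H^*(\RP^d,X)$ with a sum of local cohomology groups of the removed points. This is valid because the points are finitely many and isolated in the manifold $\RP^d$. Alternatively, one can note that $w_1(E_D)=0$ already follows from the orientability of $E_D$ observed after Proposition~\ref{prop:general-cocycle}, but the uniform argument above covers every $k<d$ at once.
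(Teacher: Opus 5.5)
Your proposal is correct and follows the paper's proof: triviality of $E_D|_X$ from Proposition~\ref{prop:punctured-trivialization} gives $j^*w_k(E_D)=0$, Lemma~\ref{lem:cohomology-injection} then forces $w_k(E_D)=0$ for $0<k<d$, and Theorem~\ref{thm:top-class} supplies $w_d(E_D)=x^d$.
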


\begin{proof}
	By Proposition~\ref{prop:punctured-trivialization}, $E_D|_X$ is trivial,
	so
	\[
	j^*w_k(E_D)=0
	\]
	for every $k>0$. If $k<d$, Lemma~\ref{lem:cohomology-injection}
	forces $w_k(E_D)=0$. The top class is given by
	Theorem~\ref{thm:top-class}.
\end{proof}

\section{The stable class in real $K$-theory}
\label{sec:ko}

The complement trivialization also makes the stable class accessible.
We use Adams' computation of $\widetilde{\KO}(\RP^n)$
\cite{Adams}. Let
\[
\eta=[\gamma]-1\in\widetilde{\KO}(\RP^n).
\]
Adams proved that
\[
\widetilde{\KO}(\RP^n)
\cong\mathbb Z/2^{\varphi(n)},
\]
generated by $\eta$, where $\varphi(n)$ is the number of integers
$s$ satisfying
\[
0<s\le n,
\qquad
s\equiv0,1,2,4\pmod8.
\]
For $d=2,4,8$ one has
\[
2^{\varphi(d)}=2d,
\qquad
2^{\varphi(d-1)}=d.
\]

\begin{lemma}
	\label{lem:generic-hyperplane}
	There exists a projective hyperplane
	\[
	H\cong\RP^{d-1}\subset\RP^d
	\]
	that contains none of the coordinate points. For every such $H$,
	\[
	E_D|_H\cong\varepsilon^d.
	\]
\end{lemma}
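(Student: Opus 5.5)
The plan has two parts: first write down an explicit hyperplane, then obtain triviality of $E_D|_H$ directly from Proposition~\ref{prop:punctured-trivialization} rather than from any computation of characteristic classes.

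For existence, take
\[
H=\{[\alpha_0:\cdots:\alpha_d]:\alpha_0+\alpha_1+\cdots+\alpha_d=0\}.
\]
At the coordinate point $P_i$ the linear form $\sum_k\alpha_k$ takes the value $1\neq 0$, so $P_i\notin H$ for every $i$. As the zero locus of a single nonzero linear form, $H$ is a projective hyperplane and hence is isomorphic to $\RP^{d-1}$. More generally, any hyperplane $\{\sum c_k\alpha_k=0\}$ with all $c_k\neq0$ works, because $P_i$ lies on it exactly when $c_i=0$.

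For triviality, let $H$ be any hyperplane containing none of the $P_i$. Then $H\subset X=\RP^d\setminus\{P_0,\ldots,P_d\}$. Proposition~\ref{prop:punctured-trivialization} supplies a global trivialization
\[
\tau:E_D|_X\cong X\times\R^d,
\]
given chartwise by $w_i\mapsto M_i^{-1}w_i$. Restricting $\tau$ to the closed subset $H$ gives an isomorphism $E_D|_H\cong H\times\R^d=\varepsilon^d$. The restriction is again a continuous bundle isomorphism, since it is the pullback of $\tau$ along the inclusion $H\hookrightarrow X$.

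The only point needing care is to confirm that the definition of $\tau$ never uses $M_i^{-1}$ at a point of $H$ where it is undefined. On $U_i\cap H$ the matrix $M_i$ is invertible, because $q_i$ vanishes only at $P_i$ (Lemma~\ref{lem:q-zero-general}) and $P_i\notin H$. So no singularity appears, and the statement follows.
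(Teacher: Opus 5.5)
Your proof is correct and follows the paper's argument essentially verbatim. It uses the same hyperplane $\alpha_0+\cdots+\alpha_d=0$, the same observation that $H\subset X$, and the same restriction of the trivialization from Proposition~\ref{prop:punctured-trivialization}; your extra remarks (any hyperplane with all coefficients nonzero works, and $M_i$ is invertible on $U_i\cap H$) are harmless elaborations, the latter already built into that proposition.
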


\begin{proof}
	For example, take
	\[
	H=\{[\alpha_0:\cdots:\alpha_d]:
	\alpha_0+\cdots+\alpha_d=0\}.
	\]
	At every coordinate point the left-hand side equals $1$, so
	$H\subset X$. Proposition~\ref{prop:punctured-trivialization} therefore
	trivializes $E_D$ on $H$.
\end{proof}

\begin{theorem}
	\label{thm:ko-class}
	For $d\in\{2,4,8\}$,
	\[
	[E_D]-d=d\eta
	\in\widetilde{\KO}(\RP^d).
	\]
	Equivalently,
	\[
	[E_D]-d=[\gamma^{\oplus d}]-d.
	\]
	Thus $E_D$ and $\gamma^{\oplus d}$ are stably isomorphic.
\end{theorem}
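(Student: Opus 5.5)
The argument is a restriction computation in $\widetilde{\KO}$ along the hyperplane inclusion of Lemma~\ref{lem:generic-hyperplane}, followed by a Stiefel--Whitney class to fix the remaining ambiguity. Write $\iota:\RP^{d-1}\cong H\hookrightarrow\RP^d$ for that inclusion. By Adams, $\widetilde{\KO}(\RP^d)\cong\mathbb Z/2d$ is generated by $\eta$, and $\widetilde{\KO}(\RP^{d-1})\cong\mathbb Z/d$ is generated by $\iota^*\eta$, since $\gamma$ restricts to the tautological line bundle. Hence
\[
\iota^*:\widetilde{\KO}(\RP^d)\longrightarrow\widetilde{\KO}(\RP^{d-1})
\]
is the surjection $\mathbb Z/2d\to\mathbb Z/d$, and its kernel is $\{0,d\eta\}$. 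All projective hyperplanes are isotopic, so we may take $H$ to be the one of Lemma~\ref{lem:generic-hyperplane}. That lemma gives $E_D|_H\cong\varepsilon^d$, so $\iota^*([E_D]-d)=0$. Therefore $[E_D]-d\in\{0,d\eta\}$.

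To rule out $0$, we use the total Stiefel--Whitney class, which is a stable invariant. If $[E_D]-d=0$, then $E_D$ would be stably trivial and $w(E_D)=1$. This contradicts Corollary~\ref{cor:total-sw}, which gives $w(E_D)=1+x^d$. So $[E_D]-d=d\eta$.

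As a consistency check, $d\eta=[\gamma^{\oplus d}]-d$ by additivity of the class map, and $w(\gamma^{\oplus d})=(1+x)^d$. For $d$ a power of two this equals $1+x^d$ modulo $x^{d+1}$, which agrees with Corollary~\ref{cor:total-sw}. The two $\widetilde{\KO}$ classes coincide, and on a finite CW complex equality of reduced $\KO$ classes is exactly stable isomorphism. This gives the last sentence of the theorem.

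The main obstacle is the identification of $\iota^*$ on Adams' groups: we must confirm that $\iota^*\eta$ is the generator of $\widetilde{\KO}(\RP^{d-1})$ and that the kernel has order exactly two. The orders $2d$ and $d$ are supplied by the $\varphi$-count recorded earlier in the paper. Surjectivity follows because $\iota^*\gamma$ is the tautological line bundle on $\RP^{d-1}$, whose class generates. Everything else is formal.
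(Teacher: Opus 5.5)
Your proposal is correct and follows the paper's own proof: the class $[E_D]-d$ restricts to zero on the hyperplane $H$, the kernel of $\mathbb Z/2d\to\mathbb Z/d$ is $\{0,d\eta\}$, and $0$ is excluded because $E_D$ has a nonzero Stiefel--Whitney class. The only minor differences are that you cite Corollary~\ref{cor:total-sw} instead of $w_d(E_D)=x^d$ from Theorem~\ref{thm:top-class}. You also get $\iota^*\eta$ as a generator directly from $\gamma|_H$ being tautological, instead of reducing to the standard hyperplane inclusion.
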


\begin{proof}
	Set
	\[
	\delta_D=[E_D]-d\in\widetilde{\KO}(\RP^d).
	\]
	By Lemma~\ref{lem:generic-hyperplane}, $\delta_D$ restricts to zero on
	a projective hyperplane $H\cong\RP^{d-1}$. Any projective hyperplane
	inclusion is equivalent, up to projective automorphism and homotopy, to
	the standard inclusion. Under Adams' description, restriction sends
	the generator $\eta$ on $\RP^d$ to the generator $\eta$ on
	$\RP^{d-1}$. Hence
	\[
	\ker\!\left(
	\widetilde{\KO}(\RP^d)
	\longrightarrow
	\widetilde{\KO}(\RP^{d-1})
	\right)
	\]
	is the order-two subgroup generated by
	\[
	2^{\varphi(d)-1}\eta=d\eta.
	\]
	Thus $\delta_D$ is either $0$ or $d\eta$.
	
	If $\delta_D=0$, then $E_D$ would be stably trivial, and stable
	triviality would force all positive-degree Stiefel--Whitney classes to
	vanish. This contradicts
	$w_d(E_D)=x^d\ne0$ from Theorem~\ref{thm:top-class}. Therefore
	\[
	\delta_D=d\eta.
	\]
	Finally,
	\[
	[\gamma^{\oplus d}]-d=d([\gamma]-1)=d\eta.
	\]
\end{proof}

\begin{remark}
	Equality of the reduced $KO$-classes means that the two bundles become
	isomorphic after adding trivial summands: there exists $r\ge0$ such that
	\[
	E_D\oplus\varepsilon^r
	\cong
	\gamma^{\oplus d}\oplus\varepsilon^r.
	\]
	The $KO$ calculation by itself does not cancel these summands. The
	cancellation is a separate unstable argument given below.
\end{remark}

\begin{remark}
	For $d=2,4,8$, the element $d\eta$ is the unique nonzero element of
	order two in $\widetilde{\KO}(\RP^d)$. The coordinate-deletion bundle
	therefore realizes this order-two class by a cocycle whose support, in
	the sense of Proposition~\ref{prop:punctured-trivialization}, is
	concentrated at $d+1$ explicit point defects.
\end{remark}

\subsection{Euler cancellation in top rank}
\label{sec:cancellation}

We now pass from the stable class to the actual rank-$d$ bundle. We use
the following even-dimensional cancellation principle.

\begin{theorem}
	\label{thm:euler-cancellation}
	Let $n$ be even and let $X$ be a finite CW complex of dimension $n$.
	If $V_0$ and $V_1$ are oriented real rank-$n$ vector bundles over $X$
	that are stably isomorphic and satisfy
	\[
	e(V_0)=e(V_1)\in H^n(X;\mathbb Z),
	\]
	then
	\[
	V_0\cong V_1.
	\]
\end{theorem}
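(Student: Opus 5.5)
The plan is to use obstruction theory for the fibration
\[
S^n\longrightarrow B\SO(n)\longrightarrow B\SO(n+1).
\]
Classifying maps of oriented rank-$n$ bundles on the $n$-dimensional complex $X$ should then be compared over a fixed rank-$(n+1)$ bundle.

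\textbf{Step 1: destabilize the stable isomorphism.} The map $B\SO(n+1)\to B\SO$ is $(n+1)$-connected. Hence for a CW complex of dimension $n$ it induces a bijection on homotopy classes of maps $X\to B\SO(n+1)$ and $X\to B\SO$. So the stable isomorphism upgrades to an isomorphism
\[
W:=V_0\oplus\varepsilon\cong V_1\oplus\varepsilon
\]
of oriented rank-$(n+1)$ bundles. If the given stable isomorphism is only unoriented, I will first check orientations componentwise. On a component where the two orientations disagree, reverse the orientation of $V_1$. This replaces $e(V_1)$ by $-e(V_1)$, and the Euler hypothesis has to be tracked through this sign.

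\textbf{Step 2: compare the two lifts.} Fix $f\colon X\to B\SO(n+1)$ classifying $W$. Then $V_0$ and $V_1$ give two lifts $\ell_0,\ell_1$ of $f$ to $B\SO(n)$, equivalently two nowhere-zero sections of $W$. The homotopy fibre $S^n$ is $(n-1)$-connected and $\dim X=n$, so there is a single obstruction level. After homotopy the lifts agree on the $(n-1)$-skeleton, and their difference is a class
\[
\delta\in H^n(X;\pi_n S^n)=H^n(X;\mathbb Z).
\]
The coefficients are untwisted because the fibration comes from oriented bundles. Changing $\ell_0$ on top cells by $\delta$ alters the bundle by the image of $\delta$ under
\[
\pi_n(S^n)\to\pi_{n-1}(\SO(n)),\qquad \iota\mapsto\tau=[TS^n].
\]
In particular, since $n$ is even and $e(TS^n)=2$, I expect
\[
e(V_1)-e(V_0)=2\delta.
\]
This should be verified by comparing Euler classes cell by cell, since the Euler class is the primary obstruction to a section of the sphere bundle. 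By the hypothesis $e(V_0)=e(V_1)$, this gives $2\delta=0$.

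\textbf{Step 3: kill the remaining $2$-torsion (main obstacle).} It remains to show that a $2$-torsion difference class gives isomorphic bundles. This is exactly the relevant case in our application, since $H^d(\RP^d;\mathbb Z)\cong\mathbb Z/2$. Two effects can make the $2$-torsion class act trivially.

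1. The actual change of bundle is $\delta\otimes\tau$ in $H^n(X;\pi_{n-1}\SO(n))\cong H^n(X;\mathbb Z)\otimes\pi_{n-1}\SO(n)$. This tensor identification uses that $X$ has dimension $n$, so the top cohomology is right exact in the coefficients. One should check whether $\delta\otimes\tau$ vanishes, as it does for $n=2$, where $\tau$ is twice the generator of $\pi_1\SO(2)$.
2. We are counting bundles up to isomorphism, not lifts up to vertical homotopy. So the indeterminacy from automorphisms of $W$, namely maps $X\to\SO(n+1)$, must be quotiented out. This involves $H^n(X;\pi_n\SO(n+1))$ and the secondary contribution from $H^{n-1}(X;\pi_{n-1}\SO(n+1))$.

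I would first try to show that the image of $\pi_n\SO(n+1)\to\pi_n S^n$, together with this gauge indeterminacy, absorbs every $\delta$ with $2\delta=0$. If that fails, I would fall back on the precise even-dimensional top-rank statement proved in Appendix~A of Gollinger \cite{Gollinger}, which the paper already cites for exactly this step, and simply verify that its hypotheses match. I expect this torsion bookkeeping to be the only delicate point. Steps 1 and 2 are standard stable-range and primary-obstruction arguments.
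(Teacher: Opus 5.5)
\textbf{Step 3 is a genuine gap, and it cannot be closed as the theorem is stated.} The statement appears to be false in the generality given, and the $2$-torsion case you isolated is exactly where it fails. Your Steps 1 and 2 are correct, and they already prove the theorem whenever $H^n(X;\mathbb Z)$ has no $2$-torsion.

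Here is a counterexample. Take $n=4$ and $X=S^3\cup_2e^4$, so $H^4(X;\mathbb Z)\cong\mathbb Z/2$. Let $c\colon X\to S^4$ be the collapse map, and set $V_0=\varepsilon^4$ and $V_1=c^*TS^4$. In your language, $V_1$ is the complement of the section $c$ of $X\times S^4$, so $\delta$ is the generator and $2\delta=0$. Then $V_1\oplus\varepsilon\cong\varepsilon^5$, and $e(V_1)=c^*(2u)=0=e(V_0)$.

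Since $X=\Sigma(S^2\cup_2e^3)$, the Puppe sequence gives $[X,B\SO(4)]\cong\pi_3\SO(4)/2\pi_3\SO(4)$. Under this identification $V_1$ corresponds to the class of $\tau=[TS^4]$. This class is not divisible by $2$. Indeed, $\ker(\pi_3\SO(4)\to\pi_3\SO)$ is generated by $\tau$, which has infinite order, and $\pi_3\SO\cong\mathbb Z$ is torsion-free. So $\tau=2x$ would force $x=m\tau$ with $2m=1$. Hence $V_1\not\cong V_0$.

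The same example defeats both of your proposed mechanisms:
\begin{enumerate}
\item $\delta\otimes\tau\neq0$, because $\tau$ is primitive. The same argument works for $n=8$ using $\pi_7\SO\cong\mathbb Z$; only for $n=2$ is $\tau$ even.
\item The gauge action is trivial. Here $[X,\SO(5)]\cong\pi_4\SO(5)/2$ comes entirely from the top cell, while $\pi_n\SO(n+1)\to\pi_nS^n$ is zero for even $n$, since $\partial\iota=\tau$ has infinite order.
\end{enumerate}
The paper gives no argument of its own, only the citation to Gollinger. Your fallback is therefore the same move, but it amounts to citing the statement under review. Whatever the cited result actually says, it must carry an extra hypothesis, for example no $2$-torsion in $H^n(X;\mathbb Z)$.

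What your framework does prove is the case the paper actually needs. For $W=V_0\oplus\varepsilon$, the sections $s_0=(0,1)$ and $-s_0$ of $S(W)$ have difference class $\pm e(V_0)$. To see this, rotate through the normalized sections $(\sin t\,v,\cos t)$ for a generic section $v$ of $V_0$; the homotopy fails exactly over the zeros of $v$. Moreover, $(-s_0)^\perp$ is $V_0$ with reversed orientation.

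Now take $X=\RP^d$ and $V_0=\gamma^{\oplus d}$. Homotopy classes of sections of $S(W)$ form a torsor over $H^d(\RP^d;\mathbb Z)\cong\mathbb Z/2$, and $e(V_0)\neq0$. So $s_0$ and $-s_0$ represent both classes. Every rank-$d$ bundle $V$ with $V\oplus\varepsilon\cong\gamma^{\oplus d}\oplus\varepsilon$ is the complement of a section homotopic to one of them, and is therefore isomorphic to $\gamma^{\oplus d}$. Thus $E_D\cong\gamma^{\oplus d}$ follows from your Step 1 plus this observation, with no general cancellation theorem.

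Finally, in Step 1 do not reverse the orientation of $V_1$. Instead, compose the stable isomorphism with a reflection of one trivial summand; this fixes orientations and leaves the Euler hypothesis untouched.
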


The exact statement above is proved in Appendix~A of Gollinger
\cite{Gollinger}; Malyi gives a broader analysis of the possible Euler
classes within a stable equivalence class \cite{Malyi}. One may also
combine the standard stable-range cancellation theorem for bundles of
rank $>\dim X$ with the one-line stabilization form of the same Euler
cancellation result; see \cite{Husemoller,Gollinger}.

\begin{theorem}
	\label{thm:actual-identification}
	For every $D\in\{\C,\Hh,\Oct\}$, with $d=\dim_\R D$, there is an
	isomorphism of real rank-$d$ vector bundles
	\[
	E_D\cong\gamma^{\oplus d}
	\qquad\text{over }\RP^d.
	\]
	In particular,
	\[
	E_{\Oct}\cong\gamma^{\oplus8}.
	\]
\end{theorem}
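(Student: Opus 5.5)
The plan is to apply Theorem~\ref{thm:euler-cancellation} with $n=d$, $X=\RP^d$, $V_0=E_D$ and $V_1=\gamma^{\oplus d}$. Here $\RP^d$ is a finite CW complex of dimension $d$ and $d\in\{2,4,8\}$ is even. Three hypotheses need checking: both bundles are oriented of rank $d$, they are stably isomorphic, and their integral Euler classes agree in $H^d(\RP^d;\mathbb Z)$.

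\emph{Orientability and stable isomorphism.} As remarked after Proposition~\ref{prop:general-cocycle}, every transition function $g_{ij}$ has positive determinant, by Lemma~\ref{lem:similarity}. So $E_D$ is orientable, and we fix an orientation. For $\gamma^{\oplus d}$ we have $w_1(\gamma^{\oplus d})=d\,x=0$ because $d$ is even, so it is orientable too, and we fix an orientation. Stable isomorphism is exactly Theorem~\ref{thm:ko-class}.

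\emph{Euler classes.} Since $d$ is even and $\RP^d$ is nonorientable, the cellular cochain complex gives $H^d(\RP^d;\mathbb Z)\cong\mathbb Z/2$. I will show that mod-$2$ reduction
\[
\rho:H^d(\RP^d;\mathbb Z)\to H^d(\RP^d;\F_2)\cong\F_2
\]
is injective. In the Bockstein sequence
\[
H^d(\mathbb Z)\xrightarrow{\cdot2}H^d(\mathbb Z)\xrightarrow{\rho}H^d(\F_2),
\]
multiplication by $2$ is zero on $\mathbb Z/2$, so $\rho$ is injective and hence an isomorphism. The mod-$2$ reduction of the Euler class is the top Stiefel--Whitney class. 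By Theorem~\ref{thm:top-class}, $\rho\,e(E_D)=w_d(E_D)=x^d$. For the other bundle, the Whitney sum formula gives $w(\gamma^{\oplus d})=(1+x)^d$, whose degree-$d$ term is $\binom{d}{d}x^d=x^d$. Indeed $(1+x)^d=1+x^d$ since $d$ is a power of $2$, which is consistent with Corollary~\ref{cor:total-sw}. So both Euler classes equal the unique nonzero element of $H^d(\RP^d;\mathbb Z)\cong\mathbb Z/2$.

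The step needing the most care is this Euler-class comparison. Over a nonorientable base one might worry about signs and orientation conventions, since changing the orientation of $V_0$ or $V_1$ negates its Euler class. Here that ambiguity disappears because the target group is $\mathbb Z/2$, so $-e=e$, and the equality holds for every choice of orientations. I would also check that Theorem~\ref{thm:euler-cancellation} is applied with untwisted integral coefficients, which is legitimate because both bundles are oriented. With all three hypotheses verified, the theorem yields $E_D\cong\gamma^{\oplus d}$ for $d=2,4,8$, and in particular $E_{\Oct}\cong\gamma^{\oplus8}$.
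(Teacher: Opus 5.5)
Your proposal is correct relative to the results stated in the paper and is essentially the paper's own proof: orientability from $\det M(u)>0$ and $w_1(\gamma^{\oplus d})=dx=0$, stable isomorphism from Theorem~\ref{thm:ko-class}, equal Euler classes because reduction $H^d(\RP^d;\mathbb Z)\cong\mathbb Z/2\to H^d(\RP^d;\F_2)$ is an isomorphism and both top Stiefel--Whitney classes equal $x^d$, and then Theorem~\ref{thm:euler-cancellation}; your Bockstein argument and sign remark only add detail. One caveat applies equally to the paper: Theorem~\ref{thm:euler-cancellation} can fail when $H^n(X;\mathbb Z)$ has $2$-torsion. For example, over $S^3\cup_2e^4$ the pullback of $TS^4$ along the collapse map is stably trivial with Euler class $2\cdot(\text{generator})=0$, yet it is nontrivial because $[TS^4]$ is primitive in $\pi_3\SO(4)$. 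On $\RP^d$ the Euler-class hypothesis is automatic, so it is safer to finish directly. Stable range gives $E_D\oplus\varepsilon\cong\gamma^{\oplus d}\oplus\varepsilon=:W$, and homotopy classes of unit sections of $W$ form a torsor under $H^d(\RP^d;\mathbb Z)\cong\mathbb Z/2$. The unit section $s$ of the trivial summand and $-s$ lie in different classes, since their difference class is $e(\gamma^{\oplus d})\neq0$, yet they have the same orthogonal complement $\gamma^{\oplus d}$. Hence every rank-$d$ destabilization of $W$, in particular $E_D$, is isomorphic to $\gamma^{\oplus d}$.
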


\begin{proof}
	The bundles are stably isomorphic by Theorem~\ref{thm:ko-class}. Both bundles are oriented: the transition functions of \(E_D\) have
	positive determinant by Lemma~\ref{lem:similarity}, while
	\[
	w_1(\gamma^{\oplus d})=dx=0
	\]
	because \(d\) is even. Their top Stiefel--Whitney classes agree:
	\[
	w_d(E_D)=x^d
	=w_d(\gamma^{\oplus d}).
	\]
	For an oriented rank-$d$ bundle, $w_d$ is the mod-$2$ reduction of the
	Euler class.For even \(d\), reduction modulo \(2\) induces an isomorphism
	\[
	H^d(\RP^d;\mathbb Z)\longrightarrow H^d(\RP^d;\F_2).
	\]
	Since \(w_d\) is the mod-\(2\) reduction of the Euler class, the common
	nonzero class \(x^d\) implies that both Euler classes are the unique
	nonzero element of \(H^d(\RP^d;\mathbb Z)\cong\mathbb Z/2\). Hence
	\[
	e(E_D)=e(\gamma^{\oplus d}).
	\]
	Theorem~\ref{thm:euler-cancellation} now gives the actual isomorphism.
\end{proof}

\begin{corollary}[Pullback to the sphere]
	\label{cor:sphere-pullback}
	Let $p:S^d\to\RP^d$ be the antipodal double cover. Then
	\[
	p^*E_D\cong\varepsilon^d.
	\]
	The pullback of the section $s_v$ has two nondegenerate zeros above each
	coordinate point. With the standard orientation on $S^d$, the two
	indices in each antipodal pair are opposite, so the signed zero count is
	zero.
\end{corollary}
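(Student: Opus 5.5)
The plan is to handle the three claims of Corollary~\ref{cor:sphere-pullback} in order: triviality of $p^*E_D$, the structure of the zero set of $p^*s_v$, and the opposite signs of the indices in each antipodal pair.

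For triviality, I will use that $p^*\gamma$ is the trivial line bundle on $S^d$. The tautological line over $\pm y$ has the nowhere-vanishing section $y$, so $p^*\gamma\cong\varepsilon^1$. Theorem~\ref{thm:actual-identification} then gives
\[
p^*E_D\cong p^*\gamma^{\oplus d}\cong\varepsilon^d .
\]
Alternatively, one can argue stably and then cancel. Theorem~\ref{thm:ko-class} together with $p^*\eta=0$ shows that $p^*E_D$ is stably trivial. Since $\widetilde{\KO}(S^d)$ and the Euler class control rank-$d$ bundles on $S^d$, Theorem~\ref{thm:euler-cancellation} applies once we know $e(p^*E_D)=0$. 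This vanishing follows from the index computation below, so the second route is also self-contained.

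For the zeros, note that $p$ is a local diffeomorphism. Hence $Z(p^*s_v)=p^{-1}Z(s_v)$ consists of the two points $\pm e_i$ above each $P_i$. Nondegeneracy pulls back because $dp$ is an isomorphism; by Lemma~\ref{lem:general-section}, the local derivative is $R_v\circ d(q_i\circ p)$.

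The main step is the sign comparison. Fix the bundle orientation of $p^*E_D$ pulled back from the $\SO(d)$-reduction of $E_D$, and give $S^d$ its standard orientation. The local index of $s_v$ at $P_i$ is measured against an orientation of $U_i$, but $U_i$ has no canonical orientation. Its two lifts to neighbourhoods of $\pm e_i$ are exchanged by the antipodal map $a$, and $p\circ a=p$. Since $d$ is even, $a$ reverses the orientation of $S^d$ (degree $(-1)^{d+1}=-1$). The section and the bundle orientation are $a$-invariant because they are pulled back from $\RP^d$. Comparing indices through the chart $q_i\circ p$ near $e_i$ and near $-e_i$, the pulled-back derivative $R_v\circ d(q_i\circ p)$ has the same bundle-side sign at both points. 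The chart itself is orientation-compatible at exactly one of the two points. Therefore the indices are $+1$ and $-1$, and each pair contributes $0$ to the signed count. Summing over the $d+1$ pairs gives total $0$, which is consistent with $e(p^*E_D)=0$ for a trivial bundle.

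The delicate point is this orientation bookkeeping: the pulled-back bundle orientation must be genuinely $a$-invariant, which holds because the $\SO(d)$-reduction lives downstairs on $\RP^d$. Only the base orientation flips under $a$, and that flip is what makes the two indices opposite.
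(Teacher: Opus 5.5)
Your proposal is correct and follows the paper's proof. Triviality comes from $p^*\gamma\cong\varepsilon^1$ together with Theorem~\ref{thm:actual-identification}, and the indices in each antipodal pair are opposite because the antipodal map has degree $(-1)^{d+1}=-1$, while the section, the pulled-back bundle orientation, and the chart derivative $R_v$ agree at both lifts. Your optional second route is also valid, though the paper does not need it: stable triviality from $p^*\eta=0$, then $e(p^*E_D)=0$ from the signed zero count, then Theorem~\ref{thm:euler-cancellation}.
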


\begin{proof}
	The pullback of the tautological line bundle is trivial, so the first
	statement follows from Theorem~\ref{thm:actual-identification}. Near the
	two lifts of a coordinate point, the projective affine coordinates are
	related by the antipodal map on $S^d$. Its degree is
	$(-1)^{d+1}=-1$ because $d$ is even, while the local derivative of the
	section is the same invertible right-multiplication map in both affine
	models. Thus the two local indices have opposite signs.
\end{proof}

\section{Complex, quaternionic, and octonionic refinements}
\label{sec:special-cases}

Theorem~\ref{thm:actual-identification} treats all three dimensions at
once. The associative cases have additional structure, while the
octonionic case has a distinctive unstable local class.

\subsection{The complex case}

For $D=\C$, associativity gives
\[
g_{ij}=L_{q_iq_j^{-1}},
\]
and these maps are complex linear. Hence $E_{\C}$ is the underlying
real bundle of a complex line bundle on $\RP^2$. Its first Chern class
is the unique nonzero element of
\[
H^2(\RP^2;\mathbb Z)\cong\mathbb Z/2,
\]
because its mod-$2$ reduction is
$w_2(E_{\C})=x^2$. Thus the complex structure refines the real
isomorphism
\[
E_{\C}\cong\gamma^{\oplus2}
\]
from Theorem~\ref{thm:actual-identification}.

\subsection{The quaternionic case}

For $D=\Hh$, associativity similarly gives
\[
g_{ij}=L_{q_iq_j^{-1}}.
\]
The transition functions lie in $\Hh^\times$, and since
$\Hh^\times/\Sp(1)\cong\R_{>0}$ is contractible, the structure group
reduces to $\Sp(1)$. Quaternionic line bundles over $\RP^4$ are
classified by
\[
[\RP^4,B\Sp(1)]\cong H^4(\RP^4;\mathbb Z)\cong\mathbb Z/2;
\]
indeed the $4$-type of $B\Sp(1)$ is $K(\mathbb Z,4)$. Since
$w_4(E_{\Hh})=x^4\ne0$, the bundle is the unique nontrivial
quaternionic line bundle. Its underlying real bundle is therefore
\[
E_{\Hh}\cong\gamma^{\oplus4},
\]
consistent with Theorem~\ref{thm:actual-identification}. See
\cite{Hatcher,CrowleyGoette} for the classifying-space viewpoint.

This recovers the original two-fold Pfister construction and is the
specialization most directly connected with the Pfister--quaternionic
embeddability perspective of Omanovic's thesis \cite{OmanovicThesis}. The
present argument does not claim that the coordinate-deletion bundle construction
appears in the thesis; rather, the thesis supplies the algebraic viewpoint from
which the quaternionic realization arose. If
\[
A=\left(\frac{-a,-b}{\R}\right),\qquad a,b>0,
\]
then a basis adapted to $A$ changes the coordinate expression of the
norm to $\langle1,a,b,ab\rangle$ but not the real bundle isomorphism
class.

\subsection{The octonionic case}

Let $D=\Oct$ and $d=8$. The unit octonions form a Moufang loop rather
than an associative topological group, so there is no analogue of the
$\Sp(1)$ reduction above. In particular,
\[
L_aL_b^{-1}
\]
need not be a single left multiplication operator. The coordinate-
deletion matrix cocycle, however, remains valid.

\begin{theorem}
	\label{thm:octonionic}
	The coordinate-deletion construction defines a continuous
	semialgebraic oriented rank-eight bundle
	\[
	E_{\Oct}\longrightarrow\RP^8
	\]
	with the following properties:
	\begin{enumerate}
		\item $E_{\Oct}$ is trivial on the complement of the nine coordinate
		points by the matrix-determined trivialization $M_i^{-1}$;
		\item relative to the fixed coordinate and basis conventions, every
		coordinate point has local clutching map
		\[
		\sigma:S^7\longrightarrow\SO(8),\qquad u\longmapsto L_u,
		\]
		the standard octonionic Hopf clutching map associated with
		$S^7\to S^{15}\to S^8$;
		\item
		\[
		w(E_{\Oct})=1+x^8,
		\qquad w_8(E_{\Oct})=x^8\ne0;
		\]
		\item
		\[
		[E_{\Oct}]-8=8\bigl([\gamma]-1\bigr)
		\quad\text{in }\widetilde{\KO}(\RP^8);
		\]
		\item the stable identification cancels, and
		\[
		E_{\Oct}\cong\gamma^{\oplus8};
		\]
		\item the local map $\sigma$ is one of the two classical generators in
		\[
		\pi_7(\SO(8))\cong\mathbb Z\oplus\mathbb Z,
		\]
		and its stabilization is a generator of
		$\pi_7(\SO)\cong\mathbb Z$.
	\end{enumerate}
\end{theorem}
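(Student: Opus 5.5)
The theorem is essentially the specialization $D=\Oct$, $d=8$ of the general results already established, so I would prove it item by item, citing the general statements and checking only that their hypotheses hold for $\Oct$.

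Continuity, semialgebraicity and the cocycle property come from Proposition~\ref{prop:general-cocycle}. Its triple-overlap computation takes place in $M_8(\R)$, so non-associativity of $\Oct$ plays no role. Orientability comes from Lemma~\ref{lem:similarity}, which gives $\det M(u)=N(u)^4>0$. Item (1) is Proposition~\ref{prop:punctured-trivialization}. Item (2) is Theorem~\ref{thm:local-hopf}: the clutching map on $\partial B_i$ is $L_{q_i}=\varepsilon L_u$, and the scalar $\varepsilon$ is removed by a radial homotopy in $\GL_8^+(\R)$. Item (3) is Theorem~\ref{thm:top-class} together with Corollary~\ref{cor:total-sw}; here one uses that nine zeros is an odd count and that $j^*$ is injective below degree $8$. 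Item (4) is Theorem~\ref{thm:ko-class}, using $2^{\varphi(8)}=16$ and $2^{\varphi(7)}=8$. Item (5) is Theorem~\ref{thm:actual-identification}, which combines the stable isomorphism with the equality of Euler classes in $H^8(\RP^8;\mathbb Z)\cong\mathbb Z/2$ and then applies Theorem~\ref{thm:euler-cancellation} with $n=8$.

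The only new content is item (6), and I expect it to be the main obstacle, since it is a homotopy-theoretic identification rather than a consequence of the construction. My plan is to use the classical description of $\pi_7(\SO(8))$, with reference \cite{CardimLamMelloRandall}.

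1. Use the fibration $\SO(7)\to\SO(8)\to S^7$. The map $u\mapsto L_u e_0=u$ is the identity of $S^7$, so $\sigma$ is a section of this fibration.
2. The long exact sequence then splits: $\pi_7(\SO(8))\cong\pi_7(\SO(7))\oplus\mathbb Z\cdot[\sigma]$. Since $\pi_7(\SO(7))\cong\mathbb Z$, this gives $\mathbb Z\oplus\mathbb Z$, and $[\sigma]$ is one of the two standard generators. The other is usually taken to be $\rho$, for instance $u\mapsto(v\mapsto uvu^{-1})$ or $L_uR_u$.
3. For the stable statement, the map $\pi_7(\SO(8))\to\pi_7(\SO)\cong\mathbb Z$ is determined by its values on $\sigma$ and $\rho$. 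Bott periodicity and the ABS description identify the class of $\sigma$ with the generator $\pm1$, because $\sigma$ is the clutching map of the irreducible Clifford module for $\mathrm{Cl}_7$ \cite{ABS}.

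Equivalently, one can argue through the bundle over $S^8$ clutched by $\sigma$. This bundle is the octonionic Hopf bundle, and its Euler class $\pm1$ and Pontryagin class $p_2=\pm6$ (times the generator) distinguish it, so its stable class generates $\widetilde{\KO}(S^8)\cong\mathbb Z$. I would cite this computation rather than redo it. The only point I would state explicitly is that the generator claims hold up to the sign conventions fixed in the remark after Theorem~\ref{thm:local-hopf}.
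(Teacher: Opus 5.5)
Your proposal is correct and follows the paper's proof. Items 1--5 are obtained by citing Proposition~\ref{prop:punctured-trivialization}, Theorem~\ref{thm:local-hopf}, Corollary~\ref{cor:total-sw}, Theorem~\ref{thm:ko-class} and Theorem~\ref{thm:actual-identification} exactly as the paper does, and item 6 rests on the same classical facts about $\pi_7(\SO(8))$ and the Clifford-module/Bott description of its stabilization, which you usefully make more explicit through the section $u\mapsto L_u$ of $\SO(8)\to S^7$. One small slip in a parenthetical: $L_uR_u$ sends $1$ to $u^2$, so it is not the $\SO(7)$-valued generator $\rho(u)v=uvu^{-1}$. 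This does not affect your argument, which only concerns $\sigma$.
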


\begin{proof}
	Statements 1.--4. are
	Proposition~\ref{prop:punctured-trivialization},
	Theorem~\ref{thm:local-hopf}, Corollary~\ref{cor:total-sw}, and
	Theorem~\ref{thm:ko-class}. Statement 5. is
	Theorem~\ref{thm:actual-identification}. For 6., the classical
	notation for $\pi_7(\SO(8))\cong\mathbb Z\oplus\mathbb Z$ takes
	$\sigma(u)(v)=uv$ as one generator; the associated sphere bundle is the octonionic Hopf
	fibration. Stabilization sends $\sigma$ to a generator of
	$\pi_7(\SO)\cong\mathbb Z$; see
	\cite{CardimLamMelloRandall}. This is also compatible with the
	Clifford-module and Bott-periodicity interpretation in
	\cite{ABS,Baez}.
\end{proof}

The abstract isomorphism $E_{\Oct}\cong\gamma^{\oplus8}$ does not
identify the matrix-determined punctured-space trivialization with the
standard direct-sum presentation of $\gamma^{\oplus8}$. The new data
are therefore not a new rank-eight isomorphism class, but an explicit
framed-defect realization of that class in which each local boundary
map is the unstable octonionic generator $\sigma\in\pi_7(\SO(8))$.
This is precisely the structure that disappears if one remembers only
the abstract bundle or only its stable $KO$ class.

In addition, the map $u\mapsto L_u$ is naturally tied to one of the eight-dimensional
representations appearing in the triality picture for $\operatorname{Spin}(8)$; left
multiplication, right multiplication, and the vector representation are
interchanged by triality. We use this only as context: the proofs above
require neither a triality identification nor a group structure on the
unit octonions. See \cite{Baez}.

\section{Composition algebras over arbitrary fields}
\label{sec:arbitrary-field}

We now explain what remains algebraic over a general field. Let $F$ be
a field with
\[
\charac F\ne2,
\]
and let $D$ be a unital composition division algebra over $F$ of
dimension
\[
d\in\{2,4,8\}.
\]
Write
\[
N_D:D\longrightarrow F
\]
for its multiplicative quadratic norm. These norms are respectively
one-, two-, and three-fold Pfister forms; see
\cite{Lam,SpringerVeldkamp}. Let $\bar u$ denote the standard
conjugation in the composition algebra. For $N_D(u)\ne0$ one has
\[
u^{-1}=\frac{\bar u}{N_D(u)}
\]
and, for left multiplication,
\[
L_u^{-1}=\frac{1}{N_D(u)}L_{\bar u}.
\]

Fix an ordered $F$-basis $e_0,\ldots,e_{d-1}$ of $D$. On the standard
affine charts $U_i\subset\PP^d_F$, define the coordinate-deletion
elements $q_i$ exactly as in the real case. At the level of
$F$-rational points, division implies that every nonzero $q_i$ is
invertible, so the matrices
\[
g_{ij}=M(q_i)M(q_j)^{-1}
\]
are defined on
\[
U_i(F)\cap U_j(F).
\]
This does not in general define an algebraic vector bundle on all of
$\PP^d_F$.

\begin{proposition}
	\label{prop:scheme-obstruction}
	After scalar extension to an algebraic closure $\overline F$, the norm
	$N_D$ is isotropic. Consequently, on a scheme-theoretic overlap
	$U_i\cap U_j$ the divisor
	\[
	N_D(q_j)=0
	\]
	is in general nonempty after scalar extension, and the entries of
	\[
	M(q_j)^{-1}
	=\frac{1}{N_D(q_j)}M(\overline{q_j})
	\]
	are not regular along that divisor. Thus anisotropy on $F$-rational
	points alone does not make the coordinate-deletion cocycle a regular
	$\GL_d$-valued cocycle on the full standard cover of $\PP^d_F$.
\end{proposition}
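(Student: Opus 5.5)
The plan is to establish three things in turn: isotropy of $N_D$ over $\overline F$, non-emptiness of the divisor $N_D(q_j)=0$ inside $U_i\cap U_j$, and non-regularity of $M(q_j)^{-1}$ along it.

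\textbf{Isotropy.} Over $\overline F$ every quadratic form of dimension at least $2$ is isotropic. Diagonalize $N_D\otimes\overline F$, which is possible because $\charac F\ne2$. Then $a_0x_0^2+a_1x_1^2=0$ has the nonzero solution $x_0=1$, $x_1=\sqrt{-a_0/a_1}$. Since $d\ge2$, $N_D$ is isotropic after scalar extension. More precisely, $D\otimes\overline F$ is the split composition algebra, so $N_D\otimes\overline F$ is hyperbolic.

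\textbf{The divisor on the overlap.} On $U_j$, the element $q_j$ has affine coordinates $y_r=\alpha_{\sigma_j(r)}/\alpha_j$. Hence $N_D(q_j)$ is a nondegenerate quadratic polynomial $Q(y_0,\dots,y_{d-1})$ in these coordinates. The overlap $U_i\cap U_j$ corresponds to the open condition $y_{r_0}\ne0$, where $r_0=\sigma_j^{-1}(i)$. So it suffices to produce an isotropic vector $y\in\overline F^{\,d}$ with $y_{r_0}\ne0$. The isotropic cone $Q=0$ is an irreducible quadric hypersurface when $d\ge3$. It is not contained in the hyperplane $y_{r_0}=0$, since a nondegenerate quadric cone spans $\overline F^{\,d}$. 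Therefore $\{Q=0\}\cap\{y_{r_0}\ne0\}$ is a nonempty open subset of the cone.

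For $d=2$, the cone over $\overline F$ is a union of two lines $y_1=\pm c\,y_0$ with $c\ne0$. Such a line meets $y_{r_0}\ne0$ unless it equals that coordinate axis. This cannot happen, because a nondegenerate binary form $ay_0^2+by_1^2$ with $ab\ne0$ has no coordinate axis in its null cone; in a general basis one argues that $Q(e_{r_0})\ne0$ can be arranged, and otherwise takes the other line. One can simplify everything by choosing the basis so that $e_0=1$; then $Q(e_0)=1\ne0$, which makes the anisotropy of coordinate axes easy to check.

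I expect this existence of a point on the overlap to be the main step needing care. The statement says ``in general nonempty'', so it is enough to exhibit this for the standard bases, but I will try to give the basis-free argument above.

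\textbf{Non-regularity.} By the formula $L_u^{-1}=N_D(u)^{-1}L_{\bar u}$, the inverse matrix equals $N_D(q_j)^{-1}M(\overline{q_j})$. At a point $p$ of the divisor, $q_j(p)\ne0$, because $p\ne P_j$ on the overlap. Hence $M(\overline{q_j}(p))\ne0$, since $L_{\bar u}(1)=\bar u\ne0$. So some entry of $M(\overline{q_j})$ is a regular function not vanishing at $p$, and dividing it by the irreducible local equation $N_D(q_j)$ gives a function with a genuine pole. This holds even when $N_D(q_j)$ is only reduced near $p$, since its gradient is nonzero away from the origin and $\charac F\ne2$.

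Equivalently, $\det M(q_j)=N_D(q_j)^{d/2}$ vanishes at $p$, so $M(q_j)$ is not invertible in $\GL_d$ of the local ring. This determinant identity is polynomial and holds over any field by the same norm-multiplicativity argument as Lemma~\ref{lem:similarity}. Consequently $g_{ij}$ is not a regular $\GL_d$-valued function on all of $U_i\cap U_j$, which completes the plan.
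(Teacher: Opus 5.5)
Your proposal is correct and follows the same route as the paper. It uses isotropy over $\overline F$, then shows the nondegenerate cone $N_D(q_j)=0$ is not contained in the coordinate hyperplane $\alpha_i=0$, hence meets the overlap, and concludes that $N_D(q_j)^{-1}M(\overline{q_j})$ fails to be regular there. Your check that $M(\overline{q_j})$ is nonzero at such points, so the vanishing denominator really gives a pole, fills in a step the paper leaves implicit; and $d=2$ needs no separate case, since your spanning argument (or anisotropy of $N_D$ on the $F$-rational basis vectors) already covers it.
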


\begin{proof}
	Over an algebraically closed field, every nondegenerate quadratic form
	of dimension at least two is isotropic. The scalar extension of
	$N_D$ is therefore isotropic. On $U_j$, the coefficients of $q_j$ are
	independent affine coordinates. The additional overlap condition
	$\alpha_i\ne0$ requires one specified coefficient of $q_j$ to be
	nonzero. Since the nondegenerate isotropic quadric $N_D=0$ is not
	contained in that coordinate hyperplane, it meets this open condition.
	At such geometric points the displayed denominator vanishes, so the
	inverse matrix is not regular there.
\end{proof}

There is nevertheless a natural open locus on which the matrix formulas
are unquestionably algebraic. Put
\[
W_i=\{p\in U_i:N_D(q_i(p))\ne0\}
\]
and
\[
X_D=\bigcup_{i=0}^d W_i\subseteq\PP^d_F.
\]

\begin{proposition}
	\label{prop:algebraic-triviality-general}
	On the cover $\{W_i\}$ of $X_D$, the transition matrices
	\[
	g_{ij}=M_iM_j^{-1}
	\]
	define an algebraic rank-$d$ vector bundle. This bundle is
	algebraically trivial.
\end{proposition}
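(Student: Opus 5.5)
The plan has two parts: check that the matrices are regular on the cover $\{W_i\}$ and form a cocycle, and then exhibit the trivialization as an explicit \v{C}ech coboundary. Both parts mirror Proposition~\ref{prop:general-cocycle} and Proposition~\ref{prop:punctured-trivialization}, now in the algebraic category over $F$.

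\textbf{Regularity.} On $U_i$, the coefficients of $q_i$ are regular functions (affine coordinate ratios). Hence the entries of $M_i=M(q_i)$ are linear in those coordinates and regular on $U_i$. The function $N_D(q_i)$ is a quadratic polynomial in the same coordinates, so $W_i$ is a principal open subset of $U_i$ and $N_D(q_i)$ is a unit on $W_i$. The identity
\[
M(u)M(\bar u)=N_D(u)I_d,
\]
valid in any composition algebra over $F$ and polarizable to a polynomial identity, gives $M_i^{-1}=N_D(q_i)^{-1}M(\overline{q_i})$. This inverse is regular on $W_i$, so $M_i$ is a morphism $W_i\to\GL_{d,F}$. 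I would check this identity via the scalar-extension-stable formula $L_u L_{\bar u}=N_D(u)\,\mathrm{id}$, which holds as a polynomial identity because alternative composition algebras satisfy $u(\bar u v)=N_D(u)v$. Consequently, on $W_i\cap W_j$ the matrix $g_{ij}=M_iM_j^{-1}$ is a regular $\GL_d$-valued function.

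\textbf{Cocycle condition.} On $W_i\cap W_j\cap W_k$ the cocycle identity is the same telescoping computation in the associative algebra $M_d(\mathcal O(W_i\cap W_j\cap W_k))$ used in Proposition~\ref{prop:general-cocycle}. This part is valid for octonion algebras as well, since no associativity of $D$ is used. Gluing trivial rank-$d$ bundles on the $W_i$ along the $g_{ij}$ then defines an algebraic vector bundle $\mathcal E$ on $X_D$.

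\textbf{Triviality.} The local isomorphisms $\tau_i=M_i^{-1}:\mathcal O_{W_i}^d\to\mathcal O_{W_i}^d$ satisfy $\tau_i g_{ij}=\tau_j$ on overlaps. This says exactly that $g_{ij}=\tau_i^{-1}\tau_j$ is a \v{C}ech coboundary. The $\tau_i$ therefore glue to a global isomorphism $\mathcal E\cong\mathcal O_{X_D}^d$, as in Proposition~\ref{prop:punctured-trivialization}.

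The main point needing care is the regularity of $M_i^{-1}$ on $W_i$. This rests on the conjugation identity holding scheme-theoretically, not merely on $F$-points. I would justify it by noting that it is a polynomial identity in the coordinates of $u$ which holds over $F$, and therefore also over every extension, or by citing the standard identity $\bar u(uv)=N_D(u)v$ from \cite{SpringerVeldkamp}. Everything else is formal.
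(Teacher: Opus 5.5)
Your proof is correct and is essentially the paper's argument. $M_i$ is a regular $\GL_d$-valued map on $W_i$, and $g_{ij}=M_iM_j^{-1}$ is the \v{C}ech coboundary of the $0$-cochain $\{M_i\}$, so reframing by $M_i^{-1}$ trivializes the bundle; your explicit check that $M_i^{-1}=N_D(q_i)^{-1}M(\overline{q_i})$ is regular on $W_i$ fills in a step the paper asserts without comment.
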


\begin{proof}
	On $W_i$, the matrix $M_i$ is a regular $\GL_d$-valued map. Hence the
	transition functions are regular on $W_i\cap W_j$. But the cocycle is
	already the \v{C}ech coboundary of the $0$-cochain $\{M_i\}$:
	\[
	g_{ij}=M_iM_j^{-1}.
	\]
	Changing local frames by $M_i^{-1}$ makes every transition matrix the
	identity.
\end{proof}

The contrast with the real projective-space construction is now
transparent. Over $\R$, the matrices $M_i$ are allowed to be singular
at the coordinate point $P_i$, because $P_i$ lies in no pairwise
overlap involving a second chart. Removing those points makes the
same matrices into a global trivializing $0$-cochain. Thus the
nontrivial real bundle is carried precisely by the local defects at the
chart centers.

\subsection{The global Pfister quadratic bundle}

Although the coordinate-deletion cocycle is algebraically trivial on
$X_D$ and does not generally extend as a regular cocycle to the whole
standard cover, the norm itself has a natural global quadratic realization.
Let
\[
\mathscr E_D=D\otimes_F\OO_{\PP^d}(-1).
\]
This is a rank-$d$ algebraic vector bundle, noncanonically isomorphic to
\[
\OO_{\PP^d}(-1)^{\oplus d}.
\]
Since $N_D$ is homogeneous of degree two, it defines an
$\OO(-2)$-valued quadratic map
\[
\mathscr N_D:\mathscr E_D\longrightarrow\OO_{\PP^d}(-2).
\]
Locally, if $t$ is a generator of $\OO(-1)$, then
\[
\mathscr N_D(u\otimes t)=N_D(u)t^2.
\]
A change $t\mapsto ft$ multiplies the expression by $f^2$, exactly as
required.

\begin{definition}
	The pair
	\[
	(\mathscr E_D,\mathscr N_D)
	\]
	is the \emph{Pfister quadratic bundle} associated with the composition
	algebra $D$.
\end{definition}

\begin{proposition}
	\label{prop:real-quadratic-bundle}
	If $F=\R$ and $D$ is a composition division algebra of dimension
	$d\in\{2,4,8\}$, then the underlying real topological vector bundle of
	$\mathscr E_D$ is
	\[
	\gamma^{\oplus d}.
	\]
	By Theorem~\ref{thm:actual-identification}, it is actually isomorphic
	to the coordinate-deletion bundle $E_D$ for all $d\in\{2,4,8\}$.
\end{proposition}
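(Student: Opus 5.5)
The plan is to identify the real points of $\mathscr E_D$ directly with a direct sum of tautological bundles, and then quote Theorem~\ref{thm:actual-identification} for the comparison with $E_D$.

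\textbf{Step 1: real points.} Over $F=\R$, the real points of $\OO_{\PP^d}(-1)$ are the tautological line bundle $\gamma\to\RP^d$. Its fibre over $[\alpha]$ is the line $\R\alpha\subset\R^{d+1}$. The standard algebraic transition functions of $\OO(-1)$ on the affine cover are $\alpha_j/\alpha_i$. These are exactly the real transition functions of $\gamma$ on $U_i\cap U_j$, with respect to the local generators $t_i=\alpha/\alpha_i$. So the underlying topological bundle of $\OO(-1)$ is $\gamma$.

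\textbf{Step 2: trivializing the tensor factor.} Fix the ordered basis $e_0,\ldots,e_{d-1}$ of $D$. This gives an isomorphism $D\cong\R^d$, and hence
\[
\mathscr E_D=D\otimes_\R\OO(-1)\cong\OO(-1)^{\oplus d},
\qquad
\sum_r c_re_r\otimes t\longmapsto (c_0t,\ldots,c_{d-1}t).
\]
On real points this is the direct sum $\gamma^{\oplus d}$, since the transition functions are the scalar matrices $(\alpha_j/\alpha_i)I_d$. The isomorphism is noncanonical, because it depends on the basis. The quadratic map $\mathscr N_D$ plays no role in identifying the underlying vector bundle. It only records the extra fibrewise structure: locally $N_D(u)t^2$, which in an orthonormal basis becomes the sum-of-squares form on $\gamma^{\oplus d}$ with values in $\gamma^{\otimes2}$.

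\textbf{Step 3: comparison with $E_D$.} Theorem~\ref{thm:actual-identification} gives $E_D\cong\gamma^{\oplus d}$ for $d=2,4,8$. Composing with Step 2 yields an isomorphism of real rank-$d$ bundles between the underlying bundle of $\mathscr E_D$ and $E_D$.

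\textbf{Main obstacle.} The only step needing care is Step 1, the identification of $\OO(-1)(\R)$ with $\gamma$. One must check the sign and index conventions of the transition functions, but any consistent convention gives a line bundle with nonzero $w_1$. Since $\gamma$ is the unique nontrivial real line bundle on $\RP^d$, a convention mismatch cannot change the isomorphism class. It is worth stressing that the resulting isomorphism $E_D\cong\underline{\mathscr E_D}$ is abstract. It does not match the coordinate-deletion cocycle with the scalar cocycle of $\mathscr E_D$. Indeed, the latter has no point defects, while $E_D$ is presented with Hopf defects at the $P_i$.
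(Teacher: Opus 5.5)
Your proposal is correct and follows essentially the same route as the paper. The paper identifies the real line bundle underlying $\OO_{\PP^d}(-1)$ with $\gamma$, tensors with the $d$-dimensional real vector space $D$ to get $\gamma^{\oplus d}$, and then invokes Theorem~\ref{thm:actual-identification}; your explicit transition-function check and your remark that the resulting isomorphism with $E_D$ is only abstract are consistent additions to that argument.
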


\begin{proof}
	The real line bundle underlying $\OO_{\PP^d}(-1)$ is the tautological
	line bundle $\gamma$. Tensoring it with the $d$-dimensional real vector
	space $D$ gives $\gamma^{\oplus d}$. Apply
	Theorem~\ref{thm:actual-identification}.
\end{proof}

\subsection{Quaternion Pfister coordinates}

The original quaternionic formula is recovered as follows. Let
\[
A=\left(\frac{-a,-b}{F}\right)
\]
with basis $1,i,j,k$, where
\[
i^2=-a,\qquad j^2=-b,\qquad ij=-ji,
\qquad k=ij.
\]
For
\[
u=x_0+x_1i+x_2j+x_3k,
\]
the reduced norm is the two-fold Pfister form
\[
\Nrd(u)=x_0^2+ax_1^2+bx_2^2+abx_3^2
=\langle1,a,b,ab\rangle(u),
\]
and the left multiplication matrix is
\[
M(u)=
\begin{pmatrix}
	x_0 & -a x_1 & -b x_2 & -abx_3\\
	x_1 & x_0 & -b x_3 & b x_2\\
	x_2 & a x_3 & x_0 & -a x_1\\
	x_3 & -x_2 & x_1 & x_0
\end{pmatrix}.
\]
Moreover,
\[
M(u)^{-1}
=\frac{1}{\Nrd(u)}M(\bar u)
\]
whenever $\Nrd(u)\ne0$. Thus the earlier two-fold Pfister cocycle is
exactly the $d=4$ specialization of the general composition-algebra
construction.

\end{document}